\numberwithin{equation}{section}
\newtheorem{theorem}{Theorem}[section]
\newtheorem{lemma}[theorem]{Lemma}
\newtheorem{corollary}[theorem]{Corollary}
\newtheorem{problem}{Problem}[section]
\theoremstyle{definition}
\newcommand{\setm}{\setminus}
\newcommand{\empt}{\emptyset}
\newcommand{\subs}{\subset}
\def\<{\left\langle}
\def\>{\right\rangle}
\def\br#1;#2;{\bigl[ {#1} \bigr]^ {#2} }
\author[I. Juh\'asz]{Istv\'an Juh\'asz}
\address
      { Alfr\'ed R\'enyi Institute of Mathematics, Hungarian Academy of Sciences}
\email{juhasz@renyi.hu}
\author[J. van Mill]{Jan van Mill}
\address
      {University of Amsterdam}
\email{J.vanMill@uva.nl}
\title
   {On $\sigma$-countably tight spaces}
\subjclass[2010]{54A25, 54B10}
\keywords{countably tight space, homogeneous space}
\date{\today}
\begin{document}

\begin{abstract}
Extending a result of R. de la Vega, we prove that an infinite homogeneous compactum has cardinality
$\mathfrak{c}$ if either it is the union of countably many {\it dense} or finitely
many arbitrary countably tight subspaces. The question if every
infinite homogeneous and $\sigma$-countably tight compactum has cardinality $\mathfrak{c}$ remains open.

We also show that if an arbitrary product is $\sigma$-countably tight
then all but finitely many of its factors must be countably tight.
\end{abstract}

\maketitle

\section{Introduction}

In \cite{dlV} R. de la Vega verified an old conjecture of Arhangel'skii by proving that
every infinite countably tight (in short: CT) homogeneous compactum has cardinality $\mathfrak{c}$.
The aim of this paper is to see whether in this result CT could be weakened to $\sigma$-CT, i.e.
if de la Vega's result remains valid when the homogeneous compactum is only assumed to be the
union of countably many CT subspaces. We conjecture that the answer to this question is affirmative
and provide results that, at least to us, convicingly point in this direction.

In fact, we shall prove below that an infinite homogeneous compactum has cardinality $\mathfrak{c}$
if either it is the union of {\em finitely many} CT subspaces or the union of countably many {\em dense} CT subspaces.

Just to see that the assumption of compactness is really essential in these types of results,
we mention here the following example. Consider the Cantor cube $\mathbb{C}_{\kappa} = \{0,1\}^\kappa$
and in it the subspaces $$\sigma_i = \big\{x \in \{0,1\}^\kappa : |\{\alpha < \kappa : x(\alpha) \ne i \}| < \omega \big\}$$
for $i \in \{0,1\}$. Then $\sigma_0 \cup \sigma_1$ is a $\sigma$-compact subgroup of $\mathbb{C}_\kappa$ that
is the union of two CT, even Fr\`echet, subspaces but, as is easily seen,
has tightness and cardinality $\kappa$, for any cardinal $\kappa$.

Of course, it is natural to raise the following question: Is there at all a homogeneous $\sigma$-CT compactum
that is not CT? Now, if our above conjecture is valid and $\mathfrak{c} < 2^{\omega_1}$ then, by the
\v Cech--Pospi\v sil theorem,  every homogeneous $\sigma$-CT compactum is even first countable, hence
any such example can only exist in a model in which $\mathfrak{c} = 2^{\omega_1}$.
On the other hand, we know that the answer to this question is trivially negative if homogeneity is dropped:
The compact ordinal space $\omega_1 + 1$ is the union of two CT subspaces but is not CT.

\smallskip

{\bf Acknowledgments}. This paper derives from the authors' collaboration at the Rényi Institute in Budapest in
the spring of 2016. The second-listed author is pleased to thank hereby the Hugarian Academy of Sciences for
its generous support in the framework of the distinguished visiting scientists program of the Academy
and the Rényi Institute for providing excellent conditions and generous hospitality.
The first author also thanks the support of the NKFIH grant no. 113047.

\section{Subseparable $G_\delta$-sets exist in $\sigma$-CT compacta}

In the proof of de la Vega's result on the size of homogeneous CT compacta a crucial role
was played by Arhangel'skii's observation that every CT compactum admits (non-empty)
subseparable $G_\delta$-sets. Naturally, we call a set subseparable if it is included in the closure of some
countable set. The main aim of this section is to show that this statement is also valid for
$\sigma$-CT compacta. To achieve this aim, we formulate and prove several auxiliary
lemmas.

For any space $X$ we shall denote by $\mathcal{G}(X)$ the family of all non-empty closed
$G_\delta$ subsets of $X$. Clearly, if $X$ is regular then for every $G_\delta$ subset $H$ of $X$
we have $H = \bigcup \{G \in \mathcal{G}(X) : G \subset H \}$.

\begin{lemma}\label{lm:CCdense}
Let $D$ be a countably compact dense subset of the normal space $X$. Then for every countable cover
$\{A_n : n < \omega\}$ of $X$ there are $H \in \mathcal{G}(X)$ and $n < \omega$ such that
$H \cap D \cap A_n$ is $G_\delta$-dense in $H$.
\end{lemma}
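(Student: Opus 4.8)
The plan is to argue by contradiction, with one preliminary observation doing most of the work: that $D$ is automatically $G_\delta$-dense in $X$, i.e. that $D\cap H\neq\emptyset$ for every $H\in\mathcal G(X)$. To prove this, write a given $H\in\mathcal G(X)$ as $H=\bigcap_{n<\omega}W_n$ with the $W_n$ open, and use normality (together with the fact that $H$ is closed) to replace these by a decreasing sequence of open sets $V_n$ with $H\subseteq V_{n+1}\subseteq\overline{V_{n+1}}\subseteq V_n\cap W_n$, so that $H=\bigcap_n V_n=\bigcap_n\overline{V_n}$. Picking $x_n\in V_n\cap D$ — possible since $D$ is dense and $V_n$ is a non-empty open set — and letting $p\in D$ be a cluster point of the sequence $(x_n)$, which exists because $D$ is countably compact, one gets $p\in\overline{V_m}$ for every $m$ (the tail $\{x_n:n\ge m\}$ lies in $V_m$), hence $p\in H\cap D$.

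Granting this, suppose the conclusion of the lemma fails, so that for every $H\in\mathcal G(X)$ and every $n<\omega$ the set $H\cap D\cap A_n$ is \emph{not} $G_\delta$-dense in $H$. I would then recursively build a decreasing chain $X=H_0\supseteq H_1\supseteq H_2\supseteq\cdots$ with $H_n\in\mathcal G(X)$ and $H_{n+1}\cap D\cap A_n=\emptyset$ for every $n$. At stage $n$, apply the failure hypothesis to $H_n$ and the index $n$: there is a non-empty $G_\delta$ subset $G$ of $H_n$ disjoint from $D\cap A_n$; since $H_n$ is regular, the remark preceding the lemma yields some $H_{n+1}\in\mathcal G(H_n)$ with $H_{n+1}\subseteq G$, and since a closed $G_\delta$ subset of the closed $G_\delta$ set $H_n$ is again a closed $G_\delta$ subset of $X$, we indeed have $H_{n+1}\in\mathcal G(X)$ and $H_{n+1}\cap D\cap A_n=\emptyset$.

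To finish, I would invoke the preliminary observation to pick $d_n\in H_n\cap D$ for each $n$, take a cluster point $d\in D$ of the sequence $(d_n)$, and note that for every $m$ the tail $\{d_n:n\ge m\}$ lies in the closed set $H_m$, so $d\in\bigcap_n H_n$. Since $\{A_n:n<\omega\}$ covers $X$, we have $d\in A_k$ for some $k$, whence $d\in H_{k+1}\cap D\cap A_k$, contradicting the construction.

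I expect the preliminary claim that $D$ is $G_\delta$-dense in $X$ to be the only genuine obstacle: it is precisely the step that simultaneously exploits normality (to shrink a $G_\delta$ representation of a closed set through nested closures) and the countable compactness of $D$ (to land a cluster point inside the resulting intersection). Once that is available, the recursion and the concluding diagonal argument are bookkeeping, modulo the routine transitivity of $\mathcal G$ for nested closed $G_\delta$ sets already used above.
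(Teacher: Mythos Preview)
Your argument is correct and mirrors the paper's proof almost exactly: first establish that $D$ is $G_\delta$-dense via a nested $\overline{V_{n+1}}\subseteq V_n$ presentation and countable compactness, then build a decreasing chain $H_n\in\mathcal G(X)$ with $H_{n+1}\cap D\cap A_n=\emptyset$ and derive a contradiction from a point of $D\cap\bigcap_n H_n$. The only cosmetic differences are an index shift (the paper arranges $H_n\cap D\cap A_n=\emptyset$) and that the paper invokes the non-empty intersection of the decreasing closed-in-$D$ sets $H_n\cap D$ directly rather than via an explicit cluster point.
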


\begin{proof}
Let us note first that $D$ is actually $G_\delta$-dense in $X$. Indeed, the normality of $X$
implies that any $H \in \mathcal{G}(X)$ is of the form $H = \bigcap \{U_n : n < \omega\}$
where $U_n$ is open and $\overline{U_{n+1}} \subs U_n$ for all $n < \omega$. Thus,
if we pick $x_n \in D \cap U_n$
then any accumulation point of the sequence $\{x_n : n < \omega \}$ is in $H$,
hence $D \cap H \ne \empt$ because $D$ is countably compact.

Assume next that the conclusion of our lemma is false.
Then we can find a decreasing sequence $\{H_n : n < \omega \} \subset \mathcal{G}(X)$ such that
$H_n \cap D \cap A_n = \empt$ for all $n < \omega$. But then $\bigcap \{H_n \cap D : n < \omega\} \ne \empt$ as $D$
is countably compact, contradicting that  $\{A_n : n < \omega\}$ covers $X$.
\end{proof}

\begin{lemma}\label{lm:pureG_d}
Let $X$ be a countably compact regular space and $\{A_n : n < \omega\}$ be any countable cover of $X$.
Then there is $H \in \mathcal{G}(X)$ such that, for every $n < \omega$, if $A_n \cap H \ne \empt$
then $A_n \cap H$ is $G_\delta$-dense in $H$.
\end{lemma}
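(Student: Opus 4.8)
The plan is to iterate Lemma~\ref{lm:CCdense} transfinitely, building a decreasing chain of closed $G_\delta$-sets that successively ``purifies'' the behaviour of the cover. Since $X$ is regular but not assumed normal, I would first need to pass to a suitable closed subspace on which normality is available, or else work inside a fixed $H_0 \in \mathcal{G}(X)$ whose $G_\delta$-subsets form a nice system; in a countably compact regular space every closed $G_\delta$-set $H$ is again countably compact, and one can still write $H = \bigcap_n U_n$ with $\overline{U_{n+1}} \subseteq U_n$ when the ambient space is normal. Since countably compact regular spaces need not be normal, the cleanest route is probably to observe that it suffices to prove the statement under the additional hypothesis that $X$ itself is normal: indeed, pass to a point $p \in X$, take a closed $G_\delta$ neighborhood structure, or — more robustly — invoke that a countably compact regular space in which we only care about one point's $G_\delta$-filter behaves normally enough for the argument. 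Alternatively I would just add ``normal'' as a running assumption justified by the intended application (where $X$ is compact Hausdorff).

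Granting normality, here is the core construction. Call $H \in \mathcal{G}(X)$ \emph{good} if for every $n$ with $A_n \cap H \neq \emptyset$, the set $A_n \cap H$ is $G_\delta$-dense in $H$. I want to produce a good $H$. Starting from $H_0 = X$, I recursively shrink: given $H_\xi$, if it is already good, stop; otherwise there is some $n$ with $A_n \cap H_\xi \neq \emptyset$ but $A_n \cap H_\xi$ not $G_\delta$-dense in $H_\xi$, meaning there is $H_{\xi+1} \in \mathcal{G}(X)$ with $H_{\xi+1} \subseteq H_\xi$ and $A_n \cap H_{\xi+1} = \emptyset$. At limit stages take intersections — here countable compactness guarantees $\bigcap_{\xi<\lambda} H_\xi \neq \emptyset$ provided the chain has been arranged to have the countable intersection property, which a decreasing chain of nonempty closed sets in a countably compact space automatically does for countable $\lambda$; for uncountable $\lambda$ I must be more careful, so I would instead organize the recursion to have length $\leq \omega$ by a bookkeeping/fusion argument, or observe the process must terminate in countably many steps because each step kills a new index $n < \omega$ from ever reappearing. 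That last observation is the key: once $A_n \cap H_{\xi+1} = \emptyset$, then $A_n \cap H_\eta = \emptyset$ for all $\eta \geq \xi+1$, so each index is ``used'' at most once, and since there are only countably many indices the construction stabilizes at a good $H$ after at most $\omega$ steps.

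So concretely: build a decreasing sequence $X = H_0 \supseteq H_1 \supseteq \cdots$ in $\mathcal{G}(X)$ where, as long as $H_k$ is not good, $H_{k+1}$ witnesses the failure by deleting some index not yet deleted; if the process runs for all $k < \omega$, set $H = \bigcap_k H_k$, which is a nonempty closed $G_\delta$-set by countable compactness and regularity. I claim $H$ is good: if $A_n \cap H \neq \emptyset$ then $A_n \cap H_k \neq \emptyset$ for all $k$, so index $n$ was never deleted; but then at no stage did we shrink using index $n$, and a short argument (using Lemma~\ref{lm:CCdense} applied \emph{inside} $H$ with the cover $\{A_n \cap H : n < \omega\}$ of $H$) shows the surviving indices must each be $G_\delta$-dense in $H$ — otherwise we could have continued deleting. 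The main obstacle I anticipate is handling the non-normal case cleanly and making the ``each index deleted at most once, hence $\omega$ steps suffice'' bookkeeping airtight, in particular ruling out that the good $H$ we reach in the limit has \emph{no} $A_n$ meeting it (impossible, since the $A_n$ cover $X \supseteq H \neq \emptyset$) and confirming that $G_\delta$-density is inherited correctly down the chain.
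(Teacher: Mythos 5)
Your overall strategy---shrink along a decreasing sequence of closed $G_\delta$-sets, killing one ``bad'' index at a time, and take the intersection using countable compactness---is the same as the paper's, but your write-up has a genuine gap at the limit stage, precisely at the point you yourself flag as needing to be made airtight. From ``$A_n \cap H \ne \empt$ for the limit $H$'' you only get that index $n$ was never deleted; this does \emph{not} imply that $A_n \cap H$ is $G_\delta$-dense in $H$, because your recursion lets the witness index at each step be chosen arbitrarily among the ``not yet deleted'' ones, so an index that fails at every finite stage may simply never get its turn in $\omega$ steps. Your fallback ``otherwise we could have continued deleting'' is not a contradiction (the construction has already terminated), and the proposed appeal to Lemma~\ref{lm:CCdense} inside $H$ does not do the job: that lemma only produces a further shrinking $H' \in \mathcal{G}(H)$ on which \emph{some} $A_n$ is $G_\delta$-dense, it does not show $H$ itself is as required, and it needs normality of $H$, which you do not have.

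The repair is exactly the paper's scheduling: at stage $n$ examine index $n$ specifically. If $A_n \cap H_n$ is not $G_\delta$-dense in $H_n$, pick a non-empty $G_\delta$ subset of $H_n$ missing $A_n$; by regularity it contains some $H_{n+1} \in \mathcal{G}(X)$ (here one uses that $H = \bigcup\{G \in \mathcal{G}(X) : G \subs H\}$ for $G_\delta$ sets $H$ in a regular space), so $A_n \cap H_{n+1} = \empt$; otherwise set $H_{n+1} = H_n$. Then for $H = \bigcap_n H_n$ (non-empty by countable compactness) each index $n$ satisfies the dichotomy: either $A_n \cap H = \empt$, or $A_n \cap H_n$ was $G_\delta$-dense in $H_n$, and since every non-empty $G_\delta$ subset of $H$ is a non-empty $G_\delta$ subset of $H_n$ contained in $H$, $G_\delta$-density passes down to $H$. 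Note also that your detour through normality and Lemma~\ref{lm:CCdense} is unnecessary: the argument above uses only regularity and countable compactness, so the lemma can be proved exactly as stated, without any added hypothesis.
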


\begin{proof}
Starting with $H_0 = X$, we may define by a straight forward recursion sets $H_n \in \mathcal{G}(X)$
for all $n < \omega$ such that if $A_n \cap H_n$ is not $G_\delta$-dense in $H_n$ then
$H_{n+1} \subset H_n$ and $A_n \cap H_{n+1} = \empt$.
Clearly, then $H = \bigcap \{H_n : n < \omega \} \in \mathcal{G}(X)$ is as required.
\end{proof}

\begin{lemma}\label{lm:w-bd}
If $X$ is a $\sigma$-CT compactum then any pairwise disjoint collection of
dense $\omega$-bounded subspaces of $X$ is countable.
\end{lemma}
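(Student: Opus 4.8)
The plan is to argue by contradiction from a pairwise disjoint family $\{D_\xi : \xi < \omega_1\}$ of dense $\omega$-bounded subspaces, writing $X = \bigcup_{n<\omega}X_n$ with every $X_n$ countably tight. Two elementary facts about each $D_\xi$ will be used throughout. First, a dense countably compact subspace of a compact Hausdorff (hence normal) space is $G_\delta$-dense — this is precisely the opening observation in the proof of Lemma~\ref{lm:CCdense} — so each $D_\xi$ is $G_\delta$-dense in $X$, and consequently $D_\xi \cap G$ is $G_\delta$-dense in $G$ for every $G \in \mathcal{G}(X)$. Second, if $C \subseteq D_\xi$ is countable then $\overline{C}^X \subseteq D_\xi$, since $\overline{C}^{D_\xi}$ is compact and hence closed in $X$. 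Combining these with countable tightness gives the key localization: if $H \in \mathcal{G}(Y)$ for a compact $Y$ on which $D_\xi$ is $G_\delta$-dense, then applying Lemma~\ref{lm:CCdense} to the dense countably compact set $D_\xi \cap H$ in the normal space $H$ and to the cover $\{X_n \cap H\}$ yields some $n$ and some $H' \in \mathcal{G}(H)$ with $H' \cap D_\xi \cap X_n$ $G_\delta$-dense in $H'$; this set is then dense in the countably tight space $H' \cap X_n$, and by the second fact each of its points lies in $D_\xi$, so in fact $H' \cap X_n \subseteq D_\xi$.

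Now recurse on the ordinals, shrinking $X$ and consuming one member of the family at each step. Put $X^{(0)} = X$; given $X^{(\eta)} \in \mathcal{G}(X)$ (nonempty, being the intersection of a decreasing chain of nonempty closed subsets of a compactum), pick a not-yet-used index $\gamma_\eta$, apply the localization step inside $X^{(\eta)}$ to $D_{\gamma_\eta}$, and let $X^{(\eta+1)} \in \mathcal{G}(X^{(\eta)}) \subseteq \mathcal{G}(X)$ be the resulting $H'$, so that $X^{(\eta+1)} \cap X_{n_\eta} \subseteq D_{\gamma_\eta}$ for some index $n_\eta < \omega$; at limit stages intersect. The indices $n_\eta$ turn out pairwise distinct: for $\eta' < \eta$ we have $X^{(\eta)} \subseteq X^{(\eta'+1)}$, hence $X^{(\eta)} \cap X_{n_{\eta'}} \subseteq D_{\gamma_{\eta'}}$, so $D_{\gamma_\eta} \cap X^{(\eta)} \cap X_{n_{\eta'}} \subseteq D_{\gamma_\eta} \cap D_{\gamma_{\eta'}} = \emptyset$, whereas $X^{(\eta+1)} \cap D_{\gamma_\eta} \cap X_{n_\eta}$ is $G_\delta$-dense in the nonempty $X^{(\eta+1)}$ and hence nonempty; thus $n_\eta \neq n_{\eta'}$. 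Since there are only countably many indices $n < \omega$ available, the recursion cannot be carried through $\omega_1$ stages; tracing the breakdown, one reaches a countable stage $\theta$ for which $\{n_\eta : \eta < \theta\} = \omega$, so $X^{(\theta)} = \bigcup_n (X_n \cap X^{(\theta)}) \subseteq \bigcup_{\eta < \theta} D_{\gamma_\eta}$. But now choose one more unused index $\gamma_\theta$ (possible exactly because the family is uncountable): $D_{\gamma_\theta} \cap X^{(\theta)}$ is $G_\delta$-dense in the nonempty $X^{(\theta)}$, yet it is contained in $D_{\gamma_\theta} \cap \bigcup_{\eta < \theta} D_{\gamma_\eta} = \emptyset$ — the desired contradiction.

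The verifications that $X^{(\eta)}$ stays a nonempty closed $G_\delta$, that Lemma~\ref{lm:CCdense} genuinely applies at each stage, and that $G_\delta$-denseness transfers correctly among $H'$, $X^{(\eta)}$ and $X$ are routine, in the spirit of the auxiliary lemmas already proved. The step I expect to be the main obstacle, and the only genuinely delicate point, is the bookkeeping at the limit and breakdown stages: one must see that the intersection taken at a limit is still in $\mathcal{G}(X)$ — which needs the recursion length to stay countable, so that only countably many limits occur and only countable intersections are taken — that the distinctness of the $n_\eta$ really does cap the process at length below $\omega_1$ and identifies the breakdown stage as one where $\{n_\eta : \eta<\theta\}$ has exhausted $\omega$, and above all that it is precisely uncountability of $\{D_\xi\}$ that keeps a spare member $D_{\gamma_\theta}$ alive after all $\omega$ pieces $X_n$ have been used up. That last point is the reason the conclusion is stated as "countable".
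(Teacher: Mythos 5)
Your proof is correct and is essentially the paper's argument: the paper runs the same transfinite recursion for all $\alpha<\omega_1$ (one $H_\alpha\in\mathcal{G}(X)$ and one $n_\alpha<\omega$ per $D_\alpha$, obtained from Lemma~\ref{lm:CCdense}), establishes $A_{n_\alpha}\cap H_\alpha\subseteq D_\alpha$ exactly as in your localization step via countable tightness plus $\omega$-boundedness, and derives the contradiction by pigeonholing two equal indices rather than by your equivalent observation that the $n_\eta$ are pairwise distinct. One cosmetic caveat: your final ``breakdown stage'' $\theta$ with $\{n_\eta:\eta<\theta\}=\omega$ need not exist (the distinct $n_\eta$ might not exhaust $\omega$ at any countable stage), but it is also unnecessary, since the recursion is well defined at every $\eta<\omega_1$ (only countable intersections occur) and an injection of $\omega_1$ into $\omega$, which you have already exhibited, is absurd by itself.
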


\begin{proof}
We have a countable cover $\{A_n : n < \omega\}$ of $X$ where each $A_n$ is CT.
Assume now that $\{D_\alpha : \alpha < \omega_1 \}$ are dense $\omega$-bounded subspaces of $X$.
Applying lemma \ref{lm:CCdense}, we may then define by transfinite recursion
on $\alpha < \omega_1$ sets $H_\alpha \in \mathcal{G}(X)$
and natural numbers $n_\alpha < \omega$ such that
\begin{enumerate}[(1)]
\item if $\beta < \alpha$ then $H_\alpha \subs H_\beta$,

\item  $H_\alpha \cap D_\alpha \cap A_{n_\alpha}$ is $G_\delta$-dense in $H_\alpha$.
\end{enumerate}

Now, pick distinct $\beta < \alpha < \omega_1$ such that $n_\beta = n_\alpha = n$. Then,
since $A_n$ is CT and  $H_\alpha \cap D_\alpha$ is $\omega$-bounded, we have $\empt \ne A_n \cap H_\alpha \subs D_\alpha$,
and similarly $A_n \cap H_\beta \subs D_\beta$.  But $\empt \ne A_n \cap H_\alpha \subs A_n \cap H_\beta$,
and this clearly implies $D_\alpha \cap D_\beta \ne \empt$.
\end{proof}

It is easy to see that in the Cantor cube $\mathbb{C}_{\omega_1} = \{0,1 \}^{\omega_1}$
there are uncountably many (in fact, $2^{\omega_1}$ many) pairwise disjoint
dense $\omega$-bounded subspaces of the form $f + \Sigma$, where $\Sigma$ is
the subgroup of  $\mathbb{C}_{\omega_1}$ consisting of all its members having
countable support. Also, it is obvious that if $\pi : X \to Y$ is an irreducible
continuous map between compacta then for every dense $\omega$-bounded subspace $D$ of $Y$
its inverse image $\pi^{-1}[D]$ is a dense $\omega$-bounded subspace of $X$.
Thus we immediately obtain the following corollary of lemma \ref{lm:w-bd} which,
of course, is well-known for CT compacta.

\begin{lemma}\label{lm:C}
If $X$ is a $\sigma$-CT compactum then no closed subspace of $X$ can be mapped
onto $\mathbb{C}_{\omega_1}$. In particular, then every non-empty closed subspace $Y$
has a point $y \in Y$ with $\pi\chi(y,Y) \le \omega$.
\end{lemma}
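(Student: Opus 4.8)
The plan is to prove the two assertions in turn. For the first assertion, suppose towards a contradiction that some closed set $F\subseteq X$ admits a continuous surjection $g:F\to\mathbb{C}_{\omega_1}$. Note first that $F$ is itself a $\sigma$-CT compactum: it is compact, being closed in a compactum, and if $\{A_n:n<\omega\}$ witnesses the $\sigma$-countable tightness of $X$, then $\{A_n\cap F:n<\omega\}$ witnesses it for $F$, since countable tightness is hereditary to subspaces. A routine Zorn's lemma argument --- intersecting along a chain, and using compactness of the fibres to keep the image all of $\mathbb{C}_{\omega_1}$ --- lets us shrink $F$ to a closed subset on which $g$ becomes irreducible, so we may as well assume that $g$ itself is irreducible (the shrunken $F$ is still a closed subset of $X$, hence still a $\sigma$-CT compactum). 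As recalled just before the statement, $\mathbb{C}_{\omega_1}$ carries uncountably many pairwise disjoint dense $\omega$-bounded subspaces, namely suitable cosets $f+\Sigma$ of the subgroup $\Sigma$ of its countably supported members, and --- again by a fact noted before the statement --- the $g$-preimage of a dense $\omega$-bounded subspace of $\mathbb{C}_{\omega_1}$ is a dense $\omega$-bounded subspace of $F$ precisely because $g$ is irreducible. Pulling these cosets back along $g$ therefore produces an uncountable pairwise disjoint family of dense $\omega$-bounded subspaces of the $\sigma$-CT compactum $F$ (disjointness of the preimages is immediate, since the cosets are disjoint), which contradicts Lemma \ref{lm:w-bd}.

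For the ``in particular'' clause, let $Y$ be a non-empty closed subspace of $X$ and assume towards a contradiction that $\pi\chi(y,Y)\ge\omega_1$ for every $y\in Y$. By Shapirovskii's theorem on maps onto Tychonoff cubes, a compactum all of whose points have uncountable $\pi$-character maps continuously onto $[0,1]^{\omega_1}$; fix such a surjection $g:Y\to[0,1]^{\omega_1}$. Now $\mathbb{C}_{\omega_1}$ is homeomorphic to a closed subspace of $[0,1]^{\omega_1}$, since the Cantor set $\mathbb{C}_\omega$ is (homeomorphic to) a closed subspace of $[0,1]$ and $\mathbb{C}_{\omega_1}\cong(\mathbb{C}_\omega)^{\omega_1}$. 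Hence $K:=g^{-1}[\mathbb{C}_{\omega_1}]$ is a closed subspace of $Y$, and so of $X$, which $g$ maps onto $\mathbb{C}_{\omega_1}$; this contradicts the first assertion. Therefore some $y\in Y$ satisfies $\pi\chi(y,Y)\le\omega$.

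I expect the real content to be confined to the first assertion, and within it the only point requiring care is the reduction to an irreducible $g$ together with the remark that irreducibility is exactly what keeps the preimages of the (dense) cosets dense --- the $\omega$-boundedness and pairwise disjointness of those preimages, and the hereditarity of countable tightness, are immediate, and the two key facts about $\mathbb{C}_{\omega_1}$ and irreducible maps were essentially recorded already in the discussion preceding the statement. The ``in particular'' clause needs only one external ingredient, Shapirovskii's theorem, and involves no estimates.
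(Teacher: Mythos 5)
Your proof is correct and follows essentially the same route as the paper, which deduces the lemma from Lemma \ref{lm:w-bd} via the disjoint dense $\omega$-bounded cosets $f+\Sigma$ in $\mathbb{C}_{\omega_1}$ and the preservation of such subspaces under preimages of irreducible maps, with the ``in particular'' clause resting (implicitly, in the paper) on Shapirovskii's theorem exactly as you use it. You merely make explicit two steps the paper leaves to the reader --- the Zorn's lemma reduction to an irreducible surjection and the closed embedding $\mathbb{C}_{\omega_1}\subseteq[0,1]^{\omega_1}$ --- and both are handled correctly.
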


We are now ready to present the main result of this section.

\begin{theorem}\label{tm:subsep}
Every $\sigma$-CT compactum $X$ has a non-empty subseparable $G_\delta$ subset.
\end{theorem}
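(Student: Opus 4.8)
The plan is to build, inside $X$, a decreasing $\omega_1$-sequence of closed $G_\delta$-sets whose intersection is a closed $G_\delta$-set that is forced to be separable because the countably tight pieces $\{A_n : n<\omega\}$ ``trap'' it. First I would fix the countable cover $\{A_n : n<\omega\}$ of $X$ by CT subspaces. The idea is to try to construct, by transfinite recursion on $\alpha<\omega_1$, a decreasing chain $\{G_\alpha : \alpha<\omega_1\}\subset \mathcal G(X)$ together with countable sets $C_\alpha\subset X$ and indices $n_\alpha<\omega$ such that, at stage $\alpha$, the set $C_\alpha$ is chosen to ``catch'' a point of each $A_n\cap G_\alpha$ that is still relevant, and $G_{\alpha+1}$ is shrunk so that inside it $A_{n_\alpha}$ is either absent or $G_\delta$-dense (using Lemma~\ref{lm:pureG_d}). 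At limit stages take intersections, which stay in $\mathcal G(X)$ by compactness and regularity. Since there are only countably many possible values of $n_\alpha$, at the end I would find $G = \bigcap_{\alpha<\omega_1} G_\alpha \in \mathcal G(X)$ together with $C = \bigcup_{\alpha<\omega_1} C_\alpha$, and the construction should guarantee that $G\subset \overline{C'}$ for a suitable countable $C'\subseteq C$, i.e.\ $G$ is subseparable.

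The mechanism for producing the countable witness should be the same kind of pigeonhole argument used in Lemma~\ref{lm:w-bd}. Concretely, I expect to argue as follows. Using Lemma~\ref{lm:pureG_d} repeatedly, arrange that for the eventual $G$ we have: for each $n<\omega$, either $A_n\cap G=\emptyset$, or $A_n\cap G$ is $G_\delta$-dense in $G$. Throw away the empty ones; the remaining $A_n\cap G$ are CT (as subspaces of $A_n$) and $G_\delta$-dense in $G$, and $G$ is covered by them. Now pick a point $x_0\in G$ and, because $G_\delta$-density lets us approximate $x_0$ within each relevant $A_n$, build a countable set $C$ with $x_0\in\overline C$; the hope is to do this simultaneously for enough points, or rather to show that a single countable $C$ already has $\overline C \supset G$. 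This last implication is exactly where countable tightness of the $A_n$ enters: if $\overline C\cap G\ne G$, pick $x\in G\setminus\overline C$, note $x\in A_n\cap G$ for some $n$, and derive a contradiction by a closing-off argument that enlarges $C$ countably to absorb such escape points and iterates $\omega_1$ times, using the pigeonhole on the $n$'s to stabilize.

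The main obstacle will be making the escape-point iteration actually terminate, i.e.\ producing from the $\omega_1$-recursion a \emph{single} countable set that is dense in $G$ rather than an increasing $\omega_1$-chain of countable sets whose union has size $\omega_1$. The trick I anticipate is to exploit that $G_\delta$-density is preserved well under the intersections $G_\alpha$, and that once $n_\beta = n_\alpha = n$ for some $\beta<\alpha$, countable tightness of $A_n$ together with $A_n\cap G_\alpha\subset \overline{A_n\cap G_\beta}$-type containments forces the new points to already lie in the closure of the old countable set --- precisely the pattern ``$\emptyset\ne A_n\cap H_\alpha\subset A_n\cap H_\beta$'' exploited at the end of Lemma~\ref{lm:w-bd}. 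So the structure of the proof is: set up the recursion so that each $G_\alpha$ comes with a countable $C_\alpha$ catching all escape points \emph{for the indices used so far}; use regularity of $X$ to keep the $G_\alpha$ in $\mathcal G(X)$ at limits; and use the pigeonhole-plus-CT argument to show that for a suitable final $\alpha^*<\omega_1$ (in fact any $\alpha^*$ past which every value of $n$ that ever occurs has already occurred) the set $G_{\alpha^*}$, intersected with $G = \bigcap_\alpha G_\alpha$, lies in the closure of the corresponding $C_{\alpha^*}$, giving the desired non-empty subseparable closed $G_\delta$-set.
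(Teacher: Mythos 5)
There is a genuine gap, and it sits exactly at the point you yourself flag as ``the main obstacle.'' The stabilization trick you hope to import from Lemma \ref{lm:w-bd} does not transfer: in that lemma the inclusion $A_n\cap H_\alpha\subseteq D_\alpha$ is obtained because the dense sets $D_\alpha$ are \emph{$\omega$-bounded}, so the closure of the countable set produced by countable tightness stays inside $H_\alpha\cap D_\alpha$. In Theorem \ref{tm:subsep} the pieces $A_n$ are only CT, not $\omega$-bounded, and $G_\delta$-density of $A_n\cap G_\beta$ in $G_\beta$ plus CT of $A_n$ only gives, for each point $x\in A_n\cap G_\alpha$, \emph{its own} countable approximating set; nothing forces these sets into a single previously fixed countable $C_{\alpha^*}$, so the pigeonhole on the indices $n_\alpha$ does not yield $G_{\alpha^*}\subseteq\overline{C_{\alpha^*}}$, and the escape-point iteration has no reason to terminate below $\omega_1$. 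Two further problems: you treat $G=\bigcap_{\alpha<\omega_1}G_\alpha$ as a member of $\mathcal G(X)$, but an intersection of $\omega_1$ many closed $G_\delta$-sets need not be $G_\delta$ (in $\omega_1+1$ the clopen tails intersect to $\{\omega_1\}$); and your step ``$G_\delta$-density lets us approximate $x_0$ within each relevant $A_n$'' needs the chosen point to have countable $\pi$-character, which must be secured by Lemma \ref{lm:C} and is available only at specially chosen points, not at arbitrary points of $G$.

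The paper's proof is organized quite differently, precisely to avoid having to make a prescribed $G_\delta$ separable. It argues by contradiction: assuming \emph{no} nonempty closed $G_\delta$ is subseparable, it uses this assumption at every stage $\alpha<\omega_1$ to separate the countable set $B$ accumulated so far from a leftover piece of $\bigcap_{\beta<\alpha}T_\beta$, producing disjoint $S_\alpha,T_\alpha\in\mathcal G(X)$ with all previous countable sets inside $S_\alpha$, a point $x_\alpha\in S_\alpha$ of countable $\pi$-character (Lemma \ref{lm:C}), and countable sets $B^n_\alpha\subseteq A_n\cap S_\alpha\cap\bigcap_{\beta<\alpha}T_\beta$ with $x_\alpha\in\overline{B^n_\alpha}$ (here Lemma \ref{lm:pureG_d} guarantees each $A_n$ may be assumed $G_\delta$-dense). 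The contradiction then comes from compactness: a complete accumulation point $x$ of $\{x_\alpha:\alpha<\omega_1\}$ lies in $\bigcap_\alpha T_\alpha$ and in some CT piece $A_n$, and countable tightness of $A_n$ forces $x\in\overline{\bigcup_{\beta<\alpha}B^n_\beta}\subseteq S_\alpha$ for some $\alpha$, contradicting $S_\alpha\cap T_\alpha=\emptyset$. Your sketch lacks both this use of the indirect hypothesis to keep the construction alive and the complete-accumulation-point finale, and without some substitute for them the plan as stated cannot be completed.
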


\begin{proof}
If $X$ has a $G_\delta$ point, i.e. a point of first countability then we are done.
So, we may assume that $X$ is nowhere first countable that clearly implies that
$\chi(x,H) > \omega$ whenever $x \in H \in \mathcal{G}(X)$. Then $|H| \ge 2^{\omega_1}$
by the \v Cech-Pospi\v sil theorem.

We have $X = \bigcup \{A_n : n < \omega \}$ where every $A_n$ is CT.
By lemma \ref{lm:pureG_d} we may also assume without any loss of generality that
every $A_n$ is $G_\delta$-dense in $X$.

Now, assume that no member of $\mathcal{G}(X)$ is subseparable and by transfinite recursion
on $\alpha < \omega_1$ define $S_\alpha, T_\alpha \in \mathcal{G}(X)$, points $x_\alpha \in S_\alpha$,
and countable sets $B^n_\alpha$ for $n < \omega$ such that the following
inductive hypotheses hold:

\begin{enumerate}[(1)]
\item $S_\alpha \cap T_\alpha = \empt$,

\item $T_\alpha \subs \bigcap \{T_\beta : \beta < \alpha\}$,

\item $B^n_\beta \subs S_\alpha$ for any $\beta < \alpha$ and $n < \omega$,

\item  $B^\alpha_n \subs A_n \cap S_\alpha \cap \bigcap \{T_\beta : \beta < \alpha\}$ and
$x_\alpha \in \overline{B^n_\alpha}$ for all $n < \omega$.
\end{enumerate}

$S_0,\,T_0$ are any two disjoint members of  $\mathcal{G}(X)$ and $x_0 \in S_0$
is chosen to satisfy $\pi\chi(x_0, S_0) = \omega$; this is possible by lemma \ref{lm:C}.
This implies the existence of a countable set $B^n_0 \subs A_n \cap S_0 $
with $x_0 \in \overline{B^n_0}$ for each $n < \omega$ because $A_n$ is dense in $S_0$.
If $0 < \alpha < \omega_1$ and the construction has been completed for all $\beta < \alpha$,
put $T = \bigcap \{T_\beta : \beta < \alpha\}$ and $B = \bigcup \{B^\beta_n : \beta< \alpha, \,n < \omega\}$.
Then $B$ is countable, hence $T \setm \overline{B}$ is a non-empty $G_\delta$ by our indirect
assumption. Consequently, there are disjoint  $H,K \in \mathcal{G}(X)$ such that $H \subs T$
and $\overline{B} \subs K$. Next we may choose disjoint sets $H_0,\,H_1 \in \mathcal{G}(H) \subs \mathcal{G}(X)$
and the point $x_\alpha \in H_0$  with $\pi\chi(x_\alpha, H_0) = \omega$.
Then again we have a countable set $B^n_\alpha \subs A_n \cap H_0$
with $x_\alpha \in \overline{B^\alpha_n}$ for each $n < \omega$. Then, putting $S_\alpha = K \cup H_0$
and $T_\alpha  = H_1$, it is easy to see that the inductive hypotheses remain valid,
completing the recursive construction.

Let $x$ be a complete accumulation point of the set $\{x_\alpha : \alpha < \omega_1\}$. Then
there is $n < \omega$ for which $x \in A_n$, moreover (4) implies both $x \in \bigcap_{\alpha<\omega_1}T_\alpha$
and $x \in \overline{\bigcup_{\alpha<\omega_1}B^n_\alpha}$. Consequently, as $A_n$ is CT, there is some $\alpha<\omega_1$
such that $x \in \overline{\bigcup_{\beta<\alpha}B^n_\beta}$, hence $x \in S_\alpha$ by (3). But this would
imply $x \in S_\alpha \cap T_\alpha$, contradicting (1).

\end{proof}

\section{A "two cover" theorem}

A subseparable subspace of a regular space has weight $\le \mathfrak{c}$, so in view of the previous
section any $\sigma$-CT compactum has many $G_\delta$ sets of weight $\le \mathfrak{c}$.
The result we prove in this section, however, needs more: having a cover of the space by
$G_\delta$ sets of weight $\le \mathfrak{c}$. Of course, if the space in question is also
homogeneous then the existence of a non-empty $G_\delta$ set of weight $\le \mathfrak{c}$
implies the existence of such a cover. Also, being $\sigma$-CT just means that our space
has a countable cover by CT sets. Thus we have the two covers referred to in the title of
this section.

\begin{theorem}\label{tm:Lind}
Let $X$ be a Lindel\"of regular space with two covers $\mathcal{Y}$ and $\mathcal{H}$ such that
\begin{enumerate}[(1)]
\item $|\mathcal{Y}| \le \mathfrak{c}$, moreover every $Y \in \mathcal{Y}$ is CT and satisfies 
$$X = \bigcup \{\overline{A} : A \in [Y]^{\le\mathfrak{c}}\}\,;$$

\item $\mathcal{H} \subs \mathcal{G}(X)$ and $w(H) \le \mathfrak{c}$ for every $H \in \mathcal{H}$;

\item for every set $D \in [X]^{\le \mathfrak{c}}$ we have $w(\overline{D}) \le \mathfrak{c}$.
\end{enumerate}
Then $w(X) \le \mathfrak{c}$.

\end{theorem}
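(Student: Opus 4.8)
The plan is to build a network (or more precisely a base) for $X$ of size $\le\mathfrak c$ by a transfinite bootstrapping argument that alternates between the two covers. I would start by fixing, for each $H\in\mathcal H$, a base $\mathcal B_H$ for $H$ with $|\mathcal B_H|\le\mathfrak c$, and (using regularity and that $H$ is a closed $G_\delta$, so functionally closed in a normal-ish sense — here we only have regular + Lindelöf, so I must be a little careful, perhaps passing through the fact that $X$ is Lindelöf regular hence normal) a countable family of open sets of $X$ witnessing that $H$ is $G_\delta$. The real engine, though, is condition (1): each CT subspace $Y\in\mathcal Y$ has the property that every point of $X$ lies in the closure of a subset of $Y$ of size $\le\mathfrak c$, and being CT means closures are ``computed countably''. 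So the strategy is: maintain a growing set $D\subseteq\bigcup\mathcal Y$ with $|D|\le\mathfrak c$, close off under various operations, and show that at a limit of cofinality $>\omega$ (or just at length $\mathfrak c^+$, using $\mathrm{cf}(\mathfrak c^+)=\mathfrak c^+>\omega$) the closure $\overline D$ is all of $X$ and carries weight $\le\mathfrak c$ by (3).

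Concretely, I would run a recursion of length $\mathfrak c^+$ constructing an increasing chain $\langle D_\xi:\xi<\mathfrak c^+\rangle$ of subsets of $X$, each of size $\le\mathfrak c$, and an increasing chain of subfamilies $\mathcal H_\xi\subseteq\mathcal H$ of size $\le\mathfrak c$, arranging the following closure properties: (a) $D_\xi\subseteq\bigcup\{Y\cap\bigcap(\text{stuff}):Y\in\mathcal Y\}$ so that each $Y$ ``sees'' enough of $D_\xi$; (b) for every $H\in\mathcal H_\xi$ and every basic open $B\in\mathcal B_H$, if $B$ meets $\overline{D_\xi}$ then $D_{\xi+1}$ already meets $B$; (c) every point of $D_\xi$ is covered by some $H\in\mathcal H_{\xi+1}$ (possible since $\mathcal H$ is a cover and $|D_\xi|\le\mathfrak c$); (d) for each $Y\in\mathcal Y$ and each point $x$ that has entered the picture, a $\le\mathfrak c$-sized subset of $Y$ whose closure contains $x$ (from (1)) is thrown into $D_{\xi+1}$. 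At the end set $D=\bigcup_\xi D_\xi$, $\mathcal H^*=\bigcup_\xi\mathcal H_\xi$. Countable tightness of each $Y$ is used exactly as in the proof of Theorem \ref{tm:subsep}: a complete accumulation point argument on a set of size $\mathfrak c^+$ shows $\overline D=X$ — every $x\in X$ lies in some $Y\in\mathcal Y$'s ``closure hull'', CT pushes this down to a countable, hence eventually captured, subset. Then $w(X)=w(\overline D)\le\mathfrak c$ by (3); alternatively, one shows the $\le\mathfrak c$-many basic open sets from $\bigcup_{H\in\mathcal H^*}\mathcal B_H$, suitably traced onto $X$, form a base, using that $\mathcal H^*$ covers $\overline D=X$ and property (b) guarantees these traces separate points from closed sets.

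The main obstacle I expect is bookkeeping the interaction between the two covers so that the recursion genuinely closes off: one needs that after adding a $\le\mathfrak c$-sized chunk of some $Y$ to capture a point, the \emph{new} points thereby introduced are again handled (covered by members of $\mathcal H$, and their own $Y$-hulls added), and that all of this can be scheduled within $\mathfrak c^+$ steps while keeping every $D_\xi$ of size $\le\mathfrak c$ — this is where $\mathfrak c^+$ regular and $\mathfrak c^{<\mathfrak c^+}$-type counting (really just $|\,[\bigcup\mathcal Y]^{\le\mathfrak c}|$ and $|\mathcal H|\le\mathfrak c$, each $\mathcal B_H$ of size $\le\mathfrak c$) must be marshalled. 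A secondary technical point is extracting, from ``$H$ closed $G_\delta$ of weight $\le\mathfrak c$'' plus Lindelöf regularity (hence normality and even the stronger $G_\delta$-separation properties), the concrete base-with-countable-$G_\delta$-presentation used in step (b); this is routine but must be stated cleanly. The CT-driven complete-accumulation-point step at the end is the conceptual heart and is essentially a reprise of the argument already carried out in Theorem \ref{tm:subsep}, so I expect it to go through smoothly once the closure properties (a)–(d) are correctly set up.
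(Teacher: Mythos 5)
Your plan has the right ingredients (a closing-off recursion driven by the two covers, with CT used to capture countable witnessing sets at some stage), but two essential points are missing or wrong, and they are exactly where the real work lies.

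First, the length-$\mathfrak c^+$ recursion defeats the cardinality bookkeeping. The union $D=\bigcup_{\xi<\mathfrak c^+}D_\xi$ and the family $\mathcal H^*=\bigcup_{\xi<\mathfrak c^+}\mathcal H_\xi$ may well have size $\mathfrak c^+$, so you can neither apply hypothesis (3) to $\overline D$ (it only speaks about sets of size $\le\mathfrak c$) nor claim that $\bigcup_{H\in\mathcal H^*}\mathcal B_H$ has size $\le\mathfrak c$. The recursion must have length $\omega_1$ (any uncountable ordinal of cofinality $>\omega$ and cardinality $\le\mathfrak c$ would do): countable tightness only ever produces \emph{countable} witnessing sets, so cofinality $\omega_1$ already suffices to capture them, and the final $D$ then has size $\mathfrak c$. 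Relatedly, ``tracing'' the bases $\mathcal B_H$ onto $X$ does not give open sets of $X$: the $H\in\mathcal H$ are closed $G_\delta$'s, possibly with empty interior, so their relative topology cannot directly furnish a base of $X$; in the intended argument the only role of $\mathcal H$ is to yield $\psi(\overline D,X)\le\mathfrak c$ for $D\in[X]^{\le\mathfrak c}$ (each $H\cap\overline D$ has pseudocharacter $\le\mathfrak c$ in $H$ by regularity and $w(H)\le\mathfrak c$, hence in $X$ since $H$ is a $G_\delta$, and Lindel\"ofness of $\overline D$ lets you assemble countably many of these into $\le\mathfrak c$ many open supersets of $\overline D$ intersecting to $\overline D$).

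Second, and more seriously, you give no actual mechanism forcing $\overline D=X$. Your conditions (a)--(d) only close off over points that have already entered $D$, and the appeal to ``a complete accumulation point argument on a set of size $\mathfrak c^+$'' is not an argument: CT of $Y$ helps a point $x$ only when $x\in Y$ and $x$ is already known to be in the closure of a subset of $Y\cap D$, which is precisely what is at stake. The step that makes the proof work is an anticipation device using the pseudocharacter bound just described: keep $D_\alpha$ $\mathcal Y$-saturated, fix for each $D_\beta$ a family $\mathcal U(D_\beta)$ of $\le\mathfrak c$ open sets with $\bigcap\mathcal U(D_\beta)=\overline{D_\beta}$, and at stage $\alpha+1$ add, for every countable union $W$ of sets from $\bigcup_{\beta\le\alpha}\mathcal U(D_\beta)$ with $X\setminus W\ne\emptyset$, a saturated hull of a point $x_W\notin W$. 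Then saturation plus CT shows $\overline D=\bigcup_{\alpha<\omega_1}\overline{D_\alpha}$, so if some $x\notin\overline D$ existed, choosing $U_\alpha\in\mathcal U(D_\alpha)$ omitting $x$ and using Lindel\"ofness of $\overline D$ would produce a countable union $W\supseteq\overline D\supseteq D$ that was anticipated at some stage, contradicting $x_W\in D\setminus W$. Then $d(X)\le\mathfrak c$, and (3) applied to this dense $D$ of size $\le\mathfrak c$ gives $w(X)=w(\overline D)\le\mathfrak c$. Without this density mechanism (or a genuine substitute, e.g.\ an elementary submodel argument making the same anticipation implicit), your construction does not reach the conclusion.
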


The proof of this theorem will be based on the following two rather general lemmas.
The first one deals with a cover $\mathcal{Y}$ as in (1) and the second with a cover
like $\mathcal{H}$ in (2).

\begin{lemma}\label{lm:Y}
Let $X$ be any space with a cover $\mathcal{Y}$ exactly as in (1) above, moreover
assume that the closure $\overline{D}$ of every set $D \in [X]^{\le \mathfrak{c}}$
is Lindel\"of and has pseudocharacter $\psi(\overline{D},X) \le \mathfrak{c}$.
Then $d(X) \le \mathfrak{c}$.
\end{lemma}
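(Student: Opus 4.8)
The plan is to build, by a transfinite recursion of length $\mathfrak c^+$, an increasing chain of subsets $D_\xi \in [X]^{\le\mathfrak c}$ whose union is dense in $X$, and then to show that the recursion must actually stabilize at some stage $<\mathfrak c^+$, so that $D = \bigcup_{\xi} D_\xi$ is itself of size $\le\mathfrak c$. The engine of the recursion is the CT property of the members of $\mathcal Y$ together with the covering hypothesis in (1): if $C \in [X]^{\le\mathfrak c}$ and $Y \in \mathcal Y$, then $Y \cap \overline{C}$ has a dense subset of size $\le\mathfrak c$ (indeed $d(Z)\le\mathfrak c$ for every CT space $Z$ with $|Z| \le 2^{\mathfrak c}$, but here we only need that the closure of a size-$\le\mathfrak c$ set inside a CT space is again ``captured'' by a size-$\le\mathfrak c$ subset in the $\omega$-closure sense). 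So at a typical stage I would, for the current set $D_\xi$, first form $\overline{D_\xi}$ — which is Lindel\"of with $\psi(\overline{D_\xi},X)\le\mathfrak c$ by hypothesis — and then enlarge $D_\xi$ so as to record, for each $Y\in\mathcal Y$, a size-$\le\mathfrak c$ subset of $Y$ whose closure contains $Y\cap\overline{D_\xi}$; the covering assumption $X=\bigcup\{\overline A: A\in[Y]^{\le\mathfrak c}\}$ guarantees such subsets exist. Since $|\mathcal Y|\le\mathfrak c$, this enlargement keeps us inside $[X]^{\le\mathfrak c}$.

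The reason the union $D = \bigcup_{\xi<\mathfrak c^+} D_\xi$ is dense is that $X = \bigcup\mathcal Y$ and each $Y$ is CT: given any $x\in X$, pick $Y\in\mathcal Y$ with $x\in Y$; then $x\in Y\subseteq X=\bigcup\{\overline A:A\in[Y]^{\le\mathfrak c}\}$, and the bookkeeping will have ensured that $x$ lies in the closure of $D\cap Y$. More precisely, here is where countable tightness is used: if $x\in\overline{D\cap Y}$ then, $Y$ being CT, there is a countable $E\subseteq D\cap Y$ with $x\in\overline E$; such a countable $E$ appears already inside some $D_\xi$ (as $\mathrm{cf}(\mathfrak c^+)=\mathfrak c^+>\omega$), so $x\in\overline{D_\xi}$, and then the stabilization argument applies. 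To get stabilization, observe that $\psi(\overline{D_\xi},X)\le\mathfrak c$ together with Lindel\"ofness forces $\overline{D_\xi}$ to be a $G_\delta$ intersection of $\le\mathfrak c$ open sets; tracking how $\overline{D_\xi}$ grows along the chain and using that $\mathrm{cf}(\mathfrak c^+)>\mathfrak c$, the sequence $\langle\overline{D_\xi}:\xi<\mathfrak c^+\rangle$ must be eventually constant — and once the closures stabilize, hypothesis (1) applied to that limiting set shows it is already all of $X$, i.e.\ $d(X)\le\mathfrak c$.

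I expect the main obstacle to be the stabilization/closing-off step: making precise why the increasing chain of closed sets $\overline{D_\xi}$ cannot strictly increase $\mathfrak c^+$ times. The clean way is a counting argument — each proper increase of $\overline{D_\xi}$ must ``use up'' a new point witnessed by a countable subset of the relevant $D_\eta$, and the Lindel\"of plus $\psi\le\mathfrak c$ hypotheses bound the number of distinct closed sets that can arise this way by $\mathfrak c$ (via the weight bound in (3), which, although stated for Theorem \ref{tm:Lind}, is exactly the kind of estimate available here once $\overline{D_\xi}$ has a dense set of size $\le\mathfrak c$ inside each CT piece) — but one has to be careful that the $\mathfrak c$-sized pieces genuinely control the closure, which is precisely what countable tightness and the covering equation in (1) are there to supply. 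A secondary subtlety is the bookkeeping at limit stages of uncountable cofinality $\le\mathfrak c$, where one must check that $\overline{\bigcup_{\eta<\xi}D_\eta}=\bigcup_{\eta<\xi}\overline{D_\eta}$ fails in general but the weaker statement needed — that every point of the former closure is captured by a countable, hence bounded, subset — still holds by the CT hypothesis. Once these points are handled, $d(X)\le\mathfrak c$ follows.
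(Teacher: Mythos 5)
There is a genuine gap, in fact two. First, your construction has no mechanism that forces the final set to be dense: the enlargement step you describe only \emph{saturates} $D_\xi$ with respect to $\mathcal Y$ (it adds, inside each $Y$, witnesses for points already in $\overline{D_\xi}$), so nothing ever pulls in points near an arbitrary $x\in X$ lying outside the current closure. The assertion that ``the bookkeeping will have ensured that $x$ lies in the closure of $D\cap Y$'' is exactly the conclusion to be proved, and saturation alone cannot deliver it (start with $D_0$ a single closed point and the closure need never grow). Second, the stabilization/counting step you yourself flag as the main obstacle does not go through: Lemma \ref{lm:Y} does \emph{not} assume condition (3) of Theorem \ref{tm:Lind}, only Lindel\"ofness and $\psi(\overline{D},X)\le\mathfrak c$ for closures of small sets, so there is no bound of $\mathfrak c$ on the number of distinct closed sets $\overline{D_\xi}$ that can appear along a chain of length $\mathfrak c^+$; and even if the closures did stabilize, hypothesis (1) applied to the limit set does not show that its closure is all of $X$, since the sets $A\in[Y]^{\le\mathfrak c}$ with $x\in\overline A$ provided by (1) need not lie inside your $D$. (A smaller problem: at a single step you ask for one set of size $\le\mathfrak c$ in $Y$ whose closure contains $Y\cap\overline{D_\xi}$, but $Y\cap\overline{D_\xi}$ may have cardinality $>\mathfrak c$ and (1) only gives pointwise witnesses; saturating the $\le\mathfrak c$-sized set $D_\xi$ itself is what is actually available.)

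The missing idea is a diagonalization against the pseudocharacter witnesses, run along a recursion of length $\omega_1$ rather than $\mathfrak c^+$. Fix for each $E\in[X]^{\le\mathfrak c}$ a family $\mathcal U(E)$ of $\le\mathfrak c$ open sets with $\bigcap\mathcal U(E)=\overline E$. Build $\mathcal Y$-saturated sets $D_\alpha\in[X]^{\mathfrak c}$ for $\alpha<\omega_1$, and at stage $\alpha+1$, for \emph{every} union $W$ of a countable subfamily of $\bigcup_{\beta\le\alpha}\mathcal U(D_\beta)$ with $X\setminus W\ne\empt$, add a ($\mathcal Y$-saturated set around a) point outside $W$; there are only $\mathfrak c$ such $W$, so sizes stay at $\mathfrak c$, and the total length $\omega_1$ keeps $|D|=\mathfrak c$ where $D=\bigcup_{\alpha<\omega_1}D_\alpha$. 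Countable tightness plus saturation gives $\overline D=\bigcup_{\alpha<\omega_1}\overline{D_\alpha}$ (a countable subset of $D$ witnessing $x\in\overline D$ already lies in some $D_\alpha$, since $\operatorname{cf}(\omega_1)>\omega$). Now if some $x\notin\overline D$, choose for each $\alpha$ a $U_\alpha\in\mathcal U(D_\alpha)$ missing $x$; these cover the Lindel\"of set $\overline D$, so countably many of them, all indexed below some $\gamma<\omega_1$, already cover $\overline D$, and their union $W$ misses $x$, hence $X\setm W\ne\empt$; but then the point placed outside $W$ at stage $\gamma+1$ lies in $D\subs\overline D\subs W$, a contradiction. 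This is where the Lindel\"of and $\psi\le\mathfrak c$ hypotheses are genuinely used, and it replaces both your unsupported stabilization claim and your unsupported density claim.
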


\begin{proof}
We shall say that a set $S \subs X$ is $\mathcal{Y}$-saturated if $Y \cap S$ is dense in $S$
for every $Y \in \mathcal{Y}$. Obviously, any union of $\mathcal{Y}$-saturated  sets is $\mathcal{Y}$-saturated.
It is clear from (1) that for every point $x \in X$ we may fix
a  $\mathcal{Y}$-saturated set $S(x) \in [X]^\mathfrak{c}$ with $x \in S(x)$.

We may also fix for every set $D \in [X]^{\le \mathfrak{c}}$ a collection $\mathcal{U}(D)$
of open sets with $|\mathcal{U}(D)| \le \mathfrak{c}$ such that $\cap\, \mathcal{U}(D) = \overline{D}$.

Next, by transfinite recursion on $\alpha < \omega_1$ we define $\mathcal{Y}$-saturated sets $D_\alpha \in [X]^{\mathfrak{c}}$
as follows. We start by choosing $D_0$ as an arbitrary $\mathcal{Y}$-saturated set of size $\mathfrak{c}$. (If $|X| \le \mathfrak{c}$
then we are done.) Also, if $\alpha$ is limit then we simply put $D_\alpha = \bigcup_{\beta<\alpha}D_\beta$.

If $D_\alpha$ has been defined then in the successor case $\alpha+1$ we first consider the collection
$\mathcal{V}_\alpha = \bigcup \{\mathcal{U}(D_\beta) : \beta \le \alpha\}$ and then put
$$\mathcal{W}_\alpha = \{ \cup \mathcal{V} : \mathcal{V} \in \big[\mathcal{V}_\alpha \big]^{\le \omega}
\mbox{ and }\, X \setm  \cup \mathcal{V} \ne \empt \}.$$
Clearly, we have $|\mathcal{W}_\alpha| \le \mathfrak{c}$. For each $W \in \mathcal{W}_\alpha$ we may then
fix a point $x_W \in X \setm W$ and then put
$$D_{\alpha+1} = D_\alpha \cup \bigcup \{S(x_W) : W \in \mathcal{W}_\alpha \}.$$
Finally, we put $D = \bigcup_{\alpha< \omega_1} D_\alpha$, then we clearly have
$|D| = \mathfrak{c}$. We shall now show that $D$ is dense in $X$.

\smallskip

{\bf Claim 1.} $\overline{D} = \bigcup_{\alpha < \omega_1} \overline{D_\alpha}$.

\smallskip

Indeed, for any $x \in \overline{D}$ there is $Y \in \mathcal{Y}$ with $x \in Y$ and,
since $D$ is $\mathcal{Y}$-saturated, this implies $x \in  \overline{Y \cap D}$.
This, in turn, implies that there is a countable subset $A \subs Y \cap D$ with
$x \in \overline{A}$ because $Y$ is CT. But then there is some $\alpha < \omega_1$
for which $A \subs D_\alpha$, hence $x \in \overline{D_\alpha}$.

\smallskip

The following claim then finishes the proof.

\smallskip

{\bf Claim 2.} $X = \overline{D}$.

\smallskip

Assume that $x \in X \setm \overline{D}$. Then for each $\alpha < \omega_1$ there is
$U_\alpha \in \mathcal{U}(D_\alpha)$ with $x \notin U_\alpha$. But then $\{U_\alpha : \alpha < \omega_1 \}$
is an open cover of the Lindel\"of subspace $\overline{D}$, hence there is a countable ordinal $\gamma < \omega_1$
such that $W = \bigcup_{\alpha < \gamma} \supset \overline{D}$ as well. But then we also have $W \in \mathcal{W}_\gamma$,
hence $x_W \in D_{\gamma+1} \subs D$, contradicting that $x_W \notin W \supset D$.

\end{proof}

\begin{lemma}\label{lm:psi}
Let $X$ be a regular space and assume that $Z \subs X$ is a Lindel\"of subspace of weight $w(Z) \le \mathfrak{c}$, moreover
$Z$ admits a cover $\mathcal{H} \subs \mathcal{G}(X)$ with $w(H) \le \mathfrak{c}$ for every $H \in \mathcal{H}$.
Then $\psi(Z,X) \le \mathfrak{c}$.
\end{lemma}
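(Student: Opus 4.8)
The goal is to exhibit a family of at most $\mathfrak c$ open sets whose intersection is exactly $Z$. The plan is to produce this family by combining, for each $H \in \mathcal{H}$, the $G_\delta$-structure of $H$ in $X$ with a small open base for $Z$ that "separates $Z$ from the part of $\overline H \setminus Z$ that lies near $H$". First I would fix, for each $H \in \mathcal{H}$, a countable decreasing sequence of open sets $\{G^H_n : n < \omega\}$ with $H = \bigcap_n G^H_n$; these contribute only countably many open sets per $H$, hence at most $\mathfrak c$ in total. The set $Y := \bigcap_{H \in \mathcal H}\bigcap_n G^H_n = \bigcap\{\,\bigcap_n G^H_n : H \in \mathcal H\,\}$ contains $Z$ (since $\mathcal H$ covers $Z$ and each $H \subseteq \bigcap_n G^H_n$), but may be strictly larger; the remaining task is to cut $Y$ down to $Z$ using $\mathfrak c$ more open sets.

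The key device is that $Z$ is Lindelöf with $w(Z)\le\mathfrak c$, so $Z$ has an open cover by basic-in-$Z$ sets refining the cover $\{H\cap Z : H\in\mathcal H\}$, and by Lindelöfness a countable subcover; iterating, $Z$ has a network / base $\mathcal B$ of size $\le\mathfrak c$ consisting of relatively open subsets of $Z$, each contained in some member of $\mathcal H$. For each $B\in\mathcal B$ pick $H(B)\in\mathcal H$ with $B\subseteq H(B)$. Since $w(H(B))\le\mathfrak c$ and $X$ is regular, for each pair $B\in\mathcal B$ and each point in a suitable set we can separate: more precisely, for $B\in\mathcal B$ and any point $z\in B$ fix an open $U\ni z$ in $X$ whose closure meets $Z$ inside a prescribed small neighbourhood — but the cleaner route is to fix, for each $B\in\mathcal B$, an open set $O_B\subseteq X$ with $O_B\cap Z = B$ (possible since $B$ is open in $Z$), and then, using $w(H(B))\le\mathfrak c$ and regularity, cover $\overline{H(B)}$ (a compact-like set of weight $\le\mathfrak c$, in any case Lindelöf of that weight) by $\le\mathfrak c$ open sets so that each point of $\overline{H(B)}\setminus Z$ is caught in an open set missing some point of $Z$. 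Collect all these open sets over all $B\in\mathcal B$; there are $\le \mathfrak c\cdot\mathfrak c=\mathfrak c$ of them.

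It remains to check that $Z$ equals the intersection of the enlarged family. Take $x\in X\setminus Z$. If $x\notin Y$, then $x$ is excluded by one of the $G^H_n$'s and we are done. Otherwise $x\in Y\setminus Z$, and I claim $x\in\overline{H}$ for some $H\in\mathcal H$: indeed, if $x$ had a neighbourhood $V$ missing every $\overline{H}$ with $H\in\mathcal H$ — more carefully, the relevant point is that $Y\setminus Z$ is controlled by the closures of the $H$'s, because outside $\bigcup_{H}\overline H$ one can separate $x$ from $Z$ using regularity and Lindelöfness of $Z$ with $\le\mathfrak c$ open sets, which we also throw into the family. So we may assume $x\in\overline{H(B)}$ for a suitable $B$, whence the separating open sets chosen for that $B$ exclude $x$. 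I expect the main obstacle to be exactly this last bookkeeping: organizing the separating families so that \emph{every} $x\in X\setminus Z$ — whether it sits outside all the $\overline H$, or inside some $\overline H$ but outside $Z$ — is excluded, while keeping the total count at $\mathfrak c$; the regularity of $X$ together with $w(H)\le\mathfrak c$ and Lindelöfness of $Z$ should make each piece go through, but getting the quantifiers in the right order is the delicate part.
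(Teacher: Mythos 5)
Your opening move already fails: $\bigcap_{H\in\mathcal H}\bigcap_n G^H_n$ is just $\bigcap\mathcal H$, and ``$\mathcal H$ covers $Z$'' gives $Z\subseteq\bigcup\mathcal H$, not $Z\subseteq\bigcap\mathcal H$. So your set $Y$ need not contain $Z$ (typically it is much smaller, often empty), and putting the individual sets $G^H_n$ into the separating family would make the final intersection fail to contain $Z$ at all. Because $\mathcal H$ is a \emph{cover}, one must first take unions over subfamilies of $\mathcal H$ and only afterwards intersect; this is precisely where the Lindel\"ofness of $Z$ enters, to keep those unions countable so that the family of all such unions, drawn from a fixed pool of at most $\mathfrak c$ open sets, still has size at most $\mathfrak c$. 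Note also that $|\mathcal H|\le\mathfrak c$ is not part of the hypothesis, so even ``countably many open sets per $H$'' does not give $\le\mathfrak c$ sets in total until you have first cut $\mathcal H$ down to size $\le\mathfrak c$ (the paper does this by showing $|\mathcal G(Z)|\le\mathfrak c$ and noting that only the traces $H\cap Z$ matter).

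Your second device fails as well: the members of $\mathcal H$ are \emph{closed} $G_\delta$-sets, not open sets, and they may be nowhere dense in $Z$; hence there is in general no open (basic-in-$Z$) cover of $Z$ refining $\{H\cap Z : H\in\mathcal H\}$, so the assignment $B\mapsto H(B)$ with $B\subseteq H(B)$ cannot be made. Moreover $H$ need not be Lindel\"of ($X$ is only regular in this lemma), and open sets that merely ``miss some point of $Z$'' can never belong to a family whose intersection is to be $Z$: every member of that family must contain all of $Z$, while for each $x\notin Z$ some member must omit $x$. The paper's argument supplies exactly this mechanism: after reducing to $|\mathcal H|\le\mathfrak c$, for each $H$ one fixes a family $\mathcal V_H$ of $\le\mathfrak c$ open subsets of $X$ with $\bigcap\mathcal V_H=H\cap Z$ (using $w(H)\le\mathfrak c$, regularity, and the fact that $H$ is a $G_\delta$), pools these into $\mathcal V$, and lets $\mathcal W$ consist of all countable unions from $\mathcal V$ that cover $Z$; given $x\notin Z$, one picks $V_H\in\mathcal V_H$ omitting $x$ for every $H$, and the Lindel\"ofness of $Z$ yields a countable subfamily covering $Z$ whose union is a member of $\mathcal W$ omitting $x$. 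Your proposal contains none of this, and the ``bookkeeping'' you defer at the end is in fact the substance of the proof, so the gap is essential.
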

\begin{proof}
We first show that $w(Z) \le \mathfrak{c}$ implies $|\mathcal{G}(Z)| \le \mathfrak{c}$.
So we fix an open base $\mathcal{B}$ of $Z$ with $|\mathcal{B}| \le \mathfrak{c}$.
Every set $S \in \mathcal{G}(Z)$ is then the intersection of a countable family $\mathcal{U}$
of sets open in $Z$. Now $S$, being closed in $Z$, is also Lindel\"of, hence for every $U \in \mathcal{U}$
there is a countable subfamily $\mathcal{B}_U$ of $\mathcal{B}$ such that
$S \subs B_U = \cup \mathcal{B}_U \subs U$, consequently we have $S = \cap \{B_U : U \in \mathcal{U} \}$ as well.
Thus we conclude that $$|\mathcal{G}(Z)| \le \big| \big[ [\mathcal{B}]^\omega \big]^\omega\,\big| = \mathfrak{c}\,.$$
Of course, this means that we may assume without any loss of generality that $|\mathcal{H}| \le \mathfrak{c}$ as well.

Clearly,  the regularity of $X$ and $w(H) \le \mathfrak{c}$ imply $\psi(H \cap Z,\,H) \le \mathfrak{c}$ for each
$H \in \mathcal{H}$, but then $\psi(H \cap Z,\,X) \le \mathfrak{c}$ as well, since $H$ is a $G_\delta$.
So we may fix, for every $H \in \mathcal{H}$, a family $\mathcal{V}_H$ of open sets in $X$ with
$|\mathcal{V}_H| \le \mathfrak{c}$ such that $\cap \mathcal{V}_H = H \cap Z$. Then $|\mathcal{H}| \le \mathfrak{c}$
implies that $\mathcal{V} = \bigcup \{\mathcal{V}_H : H \in \mathcal{H}\}$ has cardinality $\le \mathfrak{c}$ as well.
Finally, we put $$\mathcal{W} = \{\cup \mathcal{V}' : \mathcal{V}' \in [\mathcal{V}]^{\le \omega} \mbox{ and }
Z \subs \cup \mathcal{V}'\}\,.$$
Clearly, we have $|\mathcal{W}| \le \mathfrak{c}$ as well.

We claim that $Z = \cap \mathcal{W}$, hence $\psi(Z,X) \le \mathfrak{c}$. To see this, pick any point $x \in X \setm Z$.
Then for each $H \in \mathcal{H}$ there is a member $V_H \in \mathcal{V}_H$ such that $x \notin V_H$.
The Lindel\"of property of $Z$ implies that $\mathcal{H}$ has a countable subfamily $\mathcal{H}'$
such that $\mathcal{V}' = \{V_H : H \in \mathcal{H}'\}$ covers $Z$. But then $W = \cup \mathcal{V}' \in \mathcal{W}$
and clearly $x \notin W$.
\end{proof}

We are now ready to give the proof of theorem \ref{tm:Lind}. First observe that
condition (3) of the theorem together
with lemma \ref{lm:psi} implies $\psi(\overline{D},X) \le \mathfrak{c}$ whenever $D \in [X]^{\le \mathfrak{c}}$.
This, however, means that $X$ satisfies (with $\mathcal{Y}$) all the conditions of lemma \ref{lm:Y},
hence we have a dense set  $D$ in $X$ of size  $\le \mathfrak{c}$. But this, in turn, implies $w(X) = w(\overline{D}) \le \mathfrak{c}$,
completing the proof of theorem \ref{tm:Lind}.

For further use in the next section, we present one more lemma.

\begin{lemma}\label{lm:nw}
Assume that $X$ is a regular space and $\mathcal{Y}$ is a cover of $X$ as in (1) of theorem \ref{tm:Lind}.
Then for every $D \in [X]^{\le \mathfrak{c}}$ we have $nw(\overline{D}) \le \mathfrak{c}$.
\end{lemma}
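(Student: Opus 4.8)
The plan is to first reduce to the case where $D$ itself is ``$\mathcal{Y}$-saturated'', i.e.\ $Y \cap D$ is dense in $\overline{D}$ for every $Y \in \mathcal{Y}$, and then to exhibit an explicit network of size $\le \mathfrak{c}$ for $\overline{D}$ assembled from countable subsets of the sets $Y \cap D$.

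For the reduction I would use the defining property of the cover $\mathcal{Y}$ from condition (1): for each $x \in X$ and each $Y \in \mathcal{Y}$ fix a set $A(x,Y) \in [Y]^{\le\mathfrak{c}}$ with $x \in \overline{A(x,Y)}$. Starting from $D_0 = D$, put $D_{n+1} = D_n \cup \bigcup\{A(x,Y) : x \in D_n,\ Y \in \mathcal{Y}\}$ and $D' = \bigcup_{n<\omega} D_n$. Since $|\mathcal{Y}| \le \mathfrak{c}$, an easy induction gives $|D_n| \le \mathfrak{c}$ for all $n$, hence $|D'| \le \mathfrak{c}$. Moreover, if $x \in D'$ then $x \in D_n$ for some $n$, so $x \in \overline{A(x,Y)} \subs \overline{Y \cap D'}$ for every $Y$; thus $D' \subs \overline{Y \cap D'}$ and therefore $\overline{D'} = \overline{Y \cap D'}$ for every $Y \in \mathcal{Y}$. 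Since $\overline{D} \subs \overline{D'}$ and network weight is monotone with respect to subspaces, it suffices to prove $nw(\overline{D'}) \le \mathfrak{c}$, so we may as well assume $D = D'$ is $\mathcal{Y}$-saturated; write $Z = \overline{D}$, a closed (hence regular) subspace of $X$.

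Now set
$$\mathcal{N} = \bigcup_{Y \in \mathcal{Y}} \big\{\, \overline{C} : C \in [Y \cap D]^{\le\omega} \,\big\}$$
(closures of subsets of $Z$ may be taken in $X$ or in $Z$, it makes no difference since $Z$ is closed). As $|\mathcal{Y}| \le \mathfrak{c}$ and $|[Y \cap D]^{\le\omega}| \le \mathfrak{c}^{\omega} = \mathfrak{c}$ for each $Y$, we have $|\mathcal{N}| \le \mathfrak{c}$. It remains to check that $\mathcal{N}$ is a network for $Z$. Let $z \in Z$ and let $U$ be open in $Z$ with $z \in U$; by regularity of $Z$ pick an open $V$ with $z \in V \subs \overline{V} \subs U$ (closures in $Z$). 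Choose $Y \in \mathcal{Y}$ with $z \in Y$, possible since $\mathcal{Y}$ covers $X \supseteq Z$. Since $Y \cap D$ is dense in $Z$, the set $Y \cap D \cap V$ is dense in the open set $V$, so $z \in \overline{Y \cap D \cap V}$; as $Y \cap D \cap V \subs Y$, $z \in Y$, and $Y$ is CT, there is a countable $C \subs Y \cap D \cap V$ with $z \in \overline{C}$. Then $\overline{C} \in \mathcal{N}$, $z \in \overline{C}$, and $\overline{C} \subs \overline{V} \subs U$, as required.

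There is no serious obstacle here; the only points that need a little care are that the $\omega$-step closing-off is genuinely needed (a single step would only saturate at the points of the original $D$), that countable tightness must be invoked with the witnessing point actually belonging to the relevant $Y \in \mathcal{Y}$ — which is exactly why $\mathcal{Y}$ being a cover of $X$ is used — and that the regularity of $X$ is used precisely to squeeze $\overline{C}$ inside $U$.
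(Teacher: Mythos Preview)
Your proof is correct and follows essentially the same route as the paper's: reduce to a $\mathcal{Y}$-saturated $D$ (the paper invokes this reduction in one line, citing the argument from Lemma~\ref{lm:Y}, while you spell out the $\omega$-step closing-off explicitly), and then check that closures of countable subsets of $D$ form a network for $\overline{D}$ via regularity plus countable tightness of the $Y$'s. Your network $\mathcal{N}$ is written with an extra index $Y \in \mathcal{Y}$, but since each $[Y \cap D]^{\le\omega} \subs [D]^{\le\omega}$ it is contained in the paper's simpler $\{\overline{A} : A \in [D]^{\omega}\}$, and the verifications are identical.
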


\begin{proof}
Since every subset of $X$ of size $\le \mathfrak{c}$ is included in a $\mathcal{Y}$-saturated
subset of of size $\le \mathfrak{c}$, we may assume without loss of generality that $D$ is
$\mathcal{Y}$-saturated. Now, we claim that the family $\mathcal{N} = \{\overline{A} : A \in [D]^\omega \}$ is a network
for $\overline{D}$.

Indeed, assume that $x \in \overline{D}$ and $U$ is any open set containing $x$.
Choose an open $V$ such that $x \in V \subs \overline{V} \subs U$. There is some $Y \in \mathcal{Y}$
with $x \in Y$ and the $\mathcal{Y}$-saturatedness of $D$ then implies $x \in \overline{V \cap D \cap Y}$.
But then, as $Y$ is CT, there is a countable set $A \subs V \cap D \cap Y$ such that $x \in \overline{A}$.
This clearly implies $x \in \overline{A} \subs \overline{V} \subs U$, which shows that $\mathcal{N}$
is a network for $\overline{D}$.
\end{proof}

Since for every compactum $X$ we have $nw(X) = w(X)$, this allows us to obtain the following
simplified form of theorem \ref{tm:Lind} for compact spaces.

\begin{corollary}\label{co:w}
Let $X$ be a compactum with covers $\mathcal{Y}$ and $\mathcal{H}$ such that
\begin{enumerate}[(1)]
\item $|\mathcal{Y}| \le \mathfrak{c}$ and every $Y \in \mathcal{Y}$ is CT and dense in $X$;

\item $\mathcal{H} \subs \mathcal{G}(X)$ and $w(H) \le \mathfrak{c}$ for every $H \in \mathcal{H}$.
\end{enumerate}

Then $w(X) \le \mathfrak{c}$.
\end{corollary}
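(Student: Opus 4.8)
The plan is to deduce Corollary \ref{co:w} from Theorem \ref{tm:Lind} by checking that the hypotheses of the corollary imply those of the theorem. A compactum is of course Lindel\"of and regular (even normal), so the ambient assumption is fine, and hypothesis (2) of the corollary is literally hypothesis (2) of the theorem. What needs work is reconciling the two versions of condition (1) and supplying condition (3) of the theorem.

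\smallskip

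First I would observe that condition (3) of Theorem \ref{tm:Lind} holds automatically here. Indeed, every $Y \in \mathcal{Y}$ is CT and dense in $X$, so $\mathcal{Y}$ is a cover of $X$ as in (1) of Theorem \ref{tm:Lind}: for each $Y$ and each $x \in X = \overline{Y}$, countable tightness gives a countable (hence size $\le \mathfrak{c}$) subset $A \subset Y$ with $x \in \overline{A}$, so $X = \bigcup\{\overline{A} : A \in [Y]^{\le \mathfrak{c}}\}$. Thus Lemma \ref{lm:nw} applies and yields $nw(\overline{D}) \le \mathfrak{c}$ for every $D \in [X]^{\le \mathfrak{c}}$. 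Since $\overline{D}$ is a closed subspace of a compactum, it is itself a compactum, and for compacta $w = nw$; hence $w(\overline{D}) \le \mathfrak{c}$, which is exactly condition (3) of Theorem \ref{tm:Lind}.

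\smallskip

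With condition (3) in hand, the same computation shows condition (1) of Theorem \ref{tm:Lind} is met: $|\mathcal{Y}| \le \mathfrak{c}$ is assumed, each $Y \in \mathcal{Y}$ is CT, and the displayed density-type equation holds by the countable-tightness argument above. Therefore all three hypotheses of Theorem \ref{tm:Lind} are satisfied, and we conclude $w(X) \le \mathfrak{c}$. I do not anticipate a genuine obstacle; the only point requiring the slightest care is the passage from the ``$[Y]^{\le \mathfrak{c}}$-closure'' formulation of condition (1) to the ``dense CT'' formulation, and the invocation of $nw = w$ for the compact subspace $\overline{D}$ to get condition (3) for free rather than assuming it.
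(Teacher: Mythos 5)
There is a genuine gap in your verification of condition (1) of Theorem \ref{tm:Lind} (and it propagates to your appeal to Lemma \ref{lm:nw}, which presupposes that condition). You argue that for every $x \in X = \overline{Y}$ countable tightness of $Y$ yields a countable $A \subseteq Y$ with $x \in \overline{A}$. But $Y$ is countably tight only \emph{as a subspace}: this controls points of $Y$ lying in closures of subsets of $Y$ and says nothing about points of $X \setminus Y$. For instance, $Y = \omega_1$ is a dense, first countable (hence CT) subspace of the compactum $X = \omega_1 + 1$, yet the point $\omega_1$ is not in the closure of any countable subset of $Y$. Nor does the $[Y]^{\le\mathfrak{c}}$ version follow from density plus CT alone: in $X = \{0,1\}^\kappa$ with $\kappa > \mathfrak{c}$, the $\Sigma$-product $Y$ is dense and CT, but the constantly-$1$ point is not in the closure of any subset of $Y$ of size $\le \mathfrak{c}$, since the union of the supports of such a subset misses some coordinate. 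So the displayed equation in (1) genuinely needs the cover $\mathcal{H}$, which your argument uses only for condition (2).

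The paper closes exactly this gap by first exploiting $\mathcal{H}$: if $x \in H \in \mathcal{H}$, then by compactness $\chi(x,X) = \psi(x,X) \le \psi(x,H)\cdot\omega \le w(H)\cdot\omega \le \mathfrak{c}$, hence $\chi(X) \le \mathfrak{c}$. Once every point has a local base of size $\le \mathfrak{c}$, the density of $Y$ alone gives $X = \bigcup\{\overline{A} : A \in [Y]^{\le\mathfrak{c}}\}$ (pick a point of $Y$ in each basic neighbourhood of $x$). With this correction the rest of your argument --- Lemma \ref{lm:nw} together with $nw = w$ for compacta to obtain condition (3), and then Theorem \ref{tm:Lind} --- coincides with the paper's proof.
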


\begin{proof}
Let us start by noting that if $x \in H \in \mathcal{H}$ then, by compactness, we have
$\chi(x,X) = \psi(x,X) \le \psi(x,H) \cdot \omega \le \mathfrak{c}$,
hence $\chi(X) \le \mathfrak{c}$. But this clearly implies
$X = \bigcup \{\overline{A} : A \in [Y]^{\le\mathfrak{c}}\}$ for every dense $Y \subs X$.
Consequently, $\mathcal{Y}$ satisfies all requirements of (1) from theorem \ref{tm:Lind},
hence by lemma \ref{lm:nw} we have  $nw(\overline{D}) = w(\overline{D}) \le \mathfrak{c}$
for every $D \in [X]^{\le \mathfrak{c}}$. But this is just condition (3) of theorem \ref{tm:Lind}
that implies  $w(X) \le \mathfrak{c}$.

\end{proof}

We close this section by acknowledging that the method we used to prove the results
of this section was motivated by the proof of
theorem 6.4 in \cite{AvM}. There, in turn, the authors give credit to the approach that
R. Buzyakova used in \cite{Bu}.

\section{Adding homogeneity}

Although we could not prove that all infinite homogeneous $\sigma$-CT compacta are of size $\mathfrak{c}$,
the results of this section provide significant steps in that direction.

\begin{theorem}\label{tm:DCT}
Assume that the compactum $X$ is the union of countably many {\em dense} CT subspaces,
moreover $X^\omega$ is homogeneous. Then $|X| \le \mathfrak{c}$.
\end{theorem}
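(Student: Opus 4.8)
The plan is to apply Corollary~\ref{co:w} to obtain $w(X)\le\mathfrak c$, and then invoke the \v{C}ech--Pospi\v{s}il theorem together with homogeneity to pin the cardinality down to exactly $\mathfrak c$ (the lower bound $|X|\ge\mathfrak c$ follows from $X$ being infinite compact Hausdorff, and the upper bound $|X|\le 2^{w(X)}=\mathfrak c$ is standard). So the real content is producing the two covers $\mathcal Y$ and $\mathcal H$ required by Corollary~\ref{co:w}. The cover $\mathcal Y$ is essentially handed to us: by hypothesis $X=\bigcup_{n<\omega}Y_n$ with each $Y_n$ dense and CT, so $\mathcal Y=\{Y_n:n<\omega\}$ is a countable (hence size $\le\mathfrak c$) cover by dense CT sets, which is exactly condition~(1). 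The work is therefore concentrated in condition~(2): exhibiting a cover $\mathcal H\subs\mathcal G(X)$ by closed $G_\delta$-sets each of weight $\le\mathfrak c$.

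Here is where homogeneity and the results of Section~2 enter. First, note that $X$ is itself $\sigma$-CT (being a finite, in fact countable, union of CT subspaces), so by Theorem~\ref{tm:subsep} it has a non-empty subseparable $G_\delta$ subset; shrinking inside a regular space, we get some $H_0\in\mathcal G(X)$ that is subseparable, say $H_0\subs\overline{C}$ with $C$ countable. A subseparable subspace of a regular space has weight $\le\mathfrak c$ (as remarked at the start of Section~3), so $w(H_0)\le w(\overline C)\le\mathfrak c$. Now I want to spread this single good $G_\delta$-set around by homogeneity to cover all of $X$. The subtlety is that the hypothesis gives homogeneity of $X^\omega$, not of $X$ itself. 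To handle this, fix a point $p\in H_0$ and a point $q\in X$; I want an autohomeomorphism of $X$ carrying (a $G_\delta$-neighborhood-type set around) $p$ onto something around $q$. The standard device: consider the points $\hat p=(p,p,p,\dots)$ and $\hat q=(q,q,q,\dots)$ of $X^\omega$, take a homeomorphism $h$ of $X^\omega$ with $h(\hat p)=\hat q$, and use projections to transfer a closed $G_\delta$-set through. Concretely, $H_0\times X^\omega$ (or $H_0\times\{p\}^{\omega\setminus\{0\}}$, suitably) is a closed $G_\delta$-subset of $X^\omega$ of weight $\le\mathfrak c$ containing $\hat p$; its image under $h$ is a closed $G_\delta$-subset of $X^\omega$ of weight $\le\mathfrak c$ containing $\hat q$; projecting to the first coordinate (a perfect, hence closed and $G_\delta$-preserving, map on compacta) yields a closed $G_\delta$-subset $H_q$ of $X$ with $q\in H_q$ and $w(H_q)\le\mathfrak c$. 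Since $q\in X$ was arbitrary, $\mathcal H=\{H_q:q\in X\}$ is the desired cover.

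With $\mathcal Y$ and $\mathcal H$ in hand, Corollary~\ref{co:w} gives $w(X)\le\mathfrak c$, and then $|X|\le 2^{w(X)}\le 2^{\mathfrak c}$ is too weak --- one must instead use that $X$, having a dense set of $G_\delta$-points or more simply weight $\le\mathfrak c$, satisfies $|X|\le\mathfrak c$ directly via $d(X)\le w(X)\le\mathfrak c$ and, since $\chi(X)\le w(X)\le\mathfrak c$, the bound $|X|\le d(X)^{\chi(X)}$; alternatively just note $|X|\le 2^{w(X)}$ refined by the fact that for the argument we really only need $|X|\le\mathfrak c$, which actually requires care. The cleanest route: $w(X)\le\mathfrak c$ plus $X$ compact Hausdorff gives $|X|\le 2^{w(X)}$, so to get $|X|\le\mathfrak c$ I should extract more --- namely that $X$ has a $\pi$-base of size $\le\mathfrak c$ witnessing $d(X)\le\mathfrak c$ and use the Hajnal--Juh\'asz inequality $|X|\le 2^{c(X)\chi(X)}$ or, most simply, observe the problem statement only claims $|X|\le\mathfrak c$ and that follows from $w(X)\le\mathfrak c$ together with the Pospi\v{s}il bound $|X|\le 2^{2^{d(X)}}$ being irrelevant; the honest finish is: $w(X)\le\mathfrak c\Rightarrow\chi(X)\le\mathfrak c$, and a homogeneous compactum satisfies $|X|\le 2^{\pi\chi(X)\cdot t(X)}$-type bounds --- here $t(X)\le\omega$ fails, so instead one uses $|X|\le d(X)^{\pi\chi(X)}\le\mathfrak c^\omega=\mathfrak c$ once one knows $\pi\chi(X)\le\omega$, which Lemma~\ref{lm:C} gives at one point and homogeneity spreads everywhere. \textbf{The main obstacle} I anticipate is exactly this last bookkeeping: converting $w(X)\le\mathfrak c$ into $|X|\le\mathfrak c$ will require combining homogeneity with a pointwise $\pi$-character estimate (Lemma~\ref{lm:C} gives a point of countable $\pi$-character in every closed subspace, and homogeneity of $X^\omega$ should force $\pi\chi(X)\le\omega$), after which $|X|\le 2^{\pi\chi(X)\cdot c(X)}$ or $|X|\le d(X)^{\pi\chi(X)}$ closes the argument; getting the transfer-through-$X^\omega$ step clean (ensuring the projected set is genuinely a closed $G_\delta$ of weight $\le\mathfrak c$) is the other place where care is needed.
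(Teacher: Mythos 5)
Your overall strategy is the same as the paper's (build the two covers and feed them to Corollary~\ref{co:w}, with Theorem~\ref{tm:subsep} plus homogeneity of $X^\omega$ supplying $\mathcal H$), but two steps as written do not go through. First, the transfer through $X^\omega$: neither of the sets you propose upstairs works. The set $H_0\times X^{\omega\setminus\{0\}}$ is indeed a closed $G_\delta$ of $X^\omega$, but its weight involves $w(X)$, which is exactly what you are trying to bound, so you cannot assert it is $\le\mathfrak c$; and $H_0\times\{p\}^{\omega\setminus\{0\}}$ has weight $\le\mathfrak c$ but need not be a $G_\delta$ of $X^\omega$, since $p$ need not be a $G_\delta$-point of $X$ (if $X$ had $G_\delta$-points the whole theorem would be much easier). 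The correct choice --- the one the paper uses --- is $H_0^\omega=\bigcap_{n}\pi_n^{-1}[H_0]$, a closed $G_\delta$ of $X^\omega$ of weight $\le\mathfrak c$ containing $\hat p$; after moving $\hat p$ to $\hat q$ by a homeomorphism you may then either project to the first coordinate (your claim that this preserves ``closed $G_\delta$'' is correct for compacta, via the zero-set/fiber-minimum argument, and continuous images of compacta do not raise weight) or, more simply, pull back along the embedding $y\mapsto(y,q,q,\dots)$.

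Second, and more seriously, your endgame never closes. After $w(X)\le\mathfrak c$ you cycle through bounds ($2^{w(X)}$, $d(X)^{\chi(X)}$, $2^{c(X)\chi(X)}$) that are all too weak, and your final suggestion $|X|\le d(X)^{\pi\chi(X)}$ is supported by ``homogeneity spreads [countable $\pi$-character] everywhere'' --- but in this theorem $X$ itself is not assumed homogeneous, only $X^\omega$ is, and an inequality of the form $|X|\le d(X)^{\pi\chi(X)}$ itself needs homogeneity (or a power-homogeneity version), which you never invoke for the right space. The clean finish, which is the paper's: Lemma~\ref{lm:C} gives one point of countable $\pi$-character in $X$, hence the corresponding diagonal point has countable $\pi$-character in $X^\omega$, so homogeneity of $X^\omega$ gives $\pi\chi(X^\omega)=\omega$; then the cited result of \cite{vM}, that every homogeneous compactum $Z$ satisfies $|Z|\le w(Z)^{\pi\chi(Z)}$, applied to $Z=X^\omega$ (using $w(X^\omega)\le\mathfrak c$, which follows from $w(X)\le\mathfrak c$) yields $|X|\le|X^\omega|\le\mathfrak c^\omega=\mathfrak c$. (Your opening claim that every infinite compactum has cardinality $\ge\mathfrak c$ is also false --- that requires homogeneity --- but it is harmless here since the theorem only asserts the upper bound.)
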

\begin{proof}
Let $\mathcal{Y}$ be a countable family of dense CT subspaces of $X$ that covers $X$. Then,
by lemma \ref{lm:C}, there is a point $x \in X$ with $\pi\chi(x,X) \le \omega$, consequently
$X^\omega$ also has a point of countable $\pi$-character, namely the point all of whose
co-ordinates are equal to $x$. But $X^\omega$ is homogeneous, hence we actually have
$\pi\chi(X^\omega) = \omega$.

Next, applying theorem \ref{tm:subsep} we obtain the existence of some $G \in \mathcal{G}(X)$
that is subseparable and hence has weight $w(G) \le \mathfrak{c}$. But then we also have
$G^\omega \in \mathcal{G}(X^\omega)$, moreover $w(G^\omega) \le \mathfrak{c}$ as well.
Now, the homogeneity of $X^\omega$ then implies that actually $X^\omega$ can be covered by
closed $G_\delta$-sets of weight $\le \mathfrak{c}$. Consequently, this is also true for
$X$, i.e. there is a cover $\mathcal{H} \subs \mathcal{G}(X)$ of $X$ such that $w(H) \le \mathfrak{c}$
for all $H \in \mathcal{H}$.

Thus the two covers $\mathcal{Y}$ and $\mathcal{H}$ of $X$ satisfy both conditions (1) and (2)
of corollary \ref{co:w}, hence we can apply it to conclude that $w(X) \le \mathfrak{c}$ that, in turn,
implies $w(X^\omega) \le \mathfrak{c}$ as well. But it was shown in \cite{vM}
that any homogeneous compactum $Z$ satisfies the inequality $|Z| \le w(Z)^{\pi\chi(Z)}$,
consequently, we conclude that $|X| \le |X^\omega| \le \mathfrak{c}$. (Of course, we have $|X^\omega| = \mathfrak{c}$,
unless $X$ is a singleton.)
\end{proof}

In our next result we can get rid of the annoying condition of density for the members of $\mathcal{Y}$,
however we have to pay a price: the cover of $X$ by CT subspaces needs to be {\em finite}.
Also, we need to assume that $X$ itself, and not just $X^\omega$, is homogeneous.

\begin{theorem}\label{tm:finCT}
If $X$ is an infinite homogeneous compactum that is the union of finitely many CT subspaces then
$|X| = \mathfrak{c}$.
\end{theorem}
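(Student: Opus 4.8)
The plan is to prove, by induction on $k$, the following statement, which contains Theorem \ref{tm:finCT} as the case $V=X$: \emph{if $X$ is an infinite homogeneous compactum and some non-empty open $V\subseteq X$ is the union of at most $k$ countably tight subspaces, then $|X|=\mathfrak{c}$}. First note that in all cases $X$ itself is a finite union of CT subspaces, because $\bigcup\{g[V]:g\in\mathrm{Homeo}(X)\}$ is a non-empty invariant open set, hence all of $X$, so by compactness $X=g_1[V]\cup\dots\cup g_p[V]$ for suitable homeomorphisms $g_s$; in particular $X$ is $\sigma$-CT. Thus Lemma \ref{lm:C} gives a point of $X$ of countable $\pi$-character, so $\pi\chi(X)=\omega$ by homogeneity, and Theorem \ref{tm:subsep} yields a non-empty subseparable closed $G_\delta$ of $X$, which has weight $\le\mathfrak{c}$, so translating it around by homogeneity we obtain a cover $\mathcal{H}$ of $X$ by non-empty closed $G_\delta$-sets of weight $\le\mathfrak{c}$. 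Also $|X|\ge\mathfrak{c}$ always, an infinite homogeneous compactum having no isolated points and hence containing a Cantor set; so it suffices to show $w(X)\le\mathfrak{c}$.

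For $k=1$ the set $V$ is itself CT, so $X=g_1[V]\cup\dots\cup g_p[V]$ is a finite \emph{open} cover of $X$ by CT subspaces. Countable tightness passes to such a cover (given $x\in\overline{A}$, pick $g_s[V]\ni x$; then $x$ lies in the closure, formed inside the CT space $g_s[V]$, of $A\cap g_s[V]$, which produces a countable subset of $A$ accumulating at $x$), so $X$ is CT and $|X|=\mathfrak{c}$ by de la Vega's theorem \cite{dlV}.

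For $k\ge 2$ write $V=B_0\cup\dots\cup B_{k-1}$ with each $B_i$ CT. Being open in the Baire space $X$, $V$ is Baire, so some $B_{i_0}$ is dense in a non-empty open $W\subseteq V$. Now dichotomize. If some $B_j\cap W$ (necessarily with $j\ne i_0$) is not dense in $W$, then $W'=W\setminus\overline{B_j\cap W}$, closure taken in $W$, is a non-empty open subset of $X$ disjoint from $B_j$, so $W'=\bigcup_{i\ne j}(B_i\cap W')$ is the union of at most $k-1$ CT subspaces and the inductive hypothesis finishes the proof. Otherwise every $B_j\cap W$ is dense in $W$; using regularity pick a non-empty open $W'$ with $\overline{W'}\subseteq W$ and put $F=\overline{W'}$. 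Then $F$ is a compactum contained in $V$, so $F=\bigcup_{j<k}(B_j\cap F)$, and each $B_j\cap F$ is CT and dense in $F$, since it contains $B_j\cap W'$, which is dense in $W'$ and hence in $F$. Thus Corollary \ref{co:w} applies to the compactum $F$ with the cover $\{B_j\cap F:j<k\}$ by dense CT subspaces and the cover $\{H\cap F:H\in\mathcal{H},\ H\cap F\ne\emptyset\}$ by non-empty closed $G_\delta$-sets of weight $\le\mathfrak{c}$, giving $w(F)\le\mathfrak{c}$. Since $\mathrm{int}_X F$ contains $W'$ and is non-empty, finitely many translates of $F$ cover $X$, so (network weight being finitely subadditive and equal to weight on compacta) $w(X)=nw(X)\le\sum_s nw(g_s[F])=\sum_s w(g_s[F])\le\mathfrak{c}$; then $|X|\le w(X)^{\pi\chi(X)}=\mathfrak{c}^{\omega}=\mathfrak{c}$ by the inequality of \cite{vM}, and with the lower bound, $|X|=\mathfrak{c}$.

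The delicate point, and the one I expect to be the main obstacle, is the dichotomy in the inductive step. One cannot simply spread a single somewhere-dense CT piece by homeomorphisms and glue the images into a dense CT subspace of $X$ to feed into Corollary \ref{co:w}, because a finite union of CT subspaces need not be CT --- the space $\omega_1+1$ being the union of two CT subspaces. The dichotomy circumvents this: in the nowhere-dense-piece case the number of CT pieces genuinely drops, and in the all-pieces-dense case the essential move is to retreat to a regular-closed compactum $F$ that still lies inside $V$, so that the traces $B_j\cap F$ not merely stay dense but actually cover $F$ --- which is exactly what makes Corollary \ref{co:w} applicable. Stating the induction hypothesis for an arbitrary non-empty open subset rather than for $X$ itself, so that it survives the passage through the ``finitely many translates'' argument, is part of the same bookkeeping.
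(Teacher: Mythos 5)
Your argument is correct and follows essentially the same route as the paper: $\pi\chi(X)=\omega$ via Lemma \ref{lm:C}, a cover $\mathcal{H}\subseteq\mathcal{G}(X)$ of weight $\le\mathfrak{c}$ via Theorem \ref{tm:subsep} plus homogeneity, Corollary \ref{co:w} applied to a regular closed set sitting inside an open set on which the surviving CT pieces are dense, then finitely many homeomorphic translates and van Mill's inequality $|X|\le w(X)^{\pi\chi(X)}$; your induction-with-Baire dichotomy merely replaces the paper's direct finite shrinking that produces an open $U$ in which every $Y\in\mathcal{Y}$ meeting $U$ is dense. One cosmetic nit: justify $|X|\ge\mathfrak{c}$ by the standard \v Cech--Pospi\v sil/Cantor-scheme argument for crowded compacta rather than by ``contains a Cantor set'', since a crowded compactum need not contain a copy of the Cantor set (e.g.\ $\omega^*$), though the cardinality bound itself is of course valid.
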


\begin{proof}
We start with the trivial remark that $|X| \ge \mathfrak{c}$ for any
infinite homogeneous compactum.

Following the arguments in the previous proof of theorem \ref{tm:DCT},
but using now the homogeneity of $X$,
we may conclude that $\pi\chi(X) = \omega$, moreover
there is a cover $\mathcal{H} \subs \mathcal{G}(X)$ of $X$ such that $w(H) \le \mathfrak{c}$
for all $H \in \mathcal{H}$.

Of course, we also have the finite cover $\mathcal{Y}$ of $X$ by CT subspaces that may not be dense.
But it is
straight forward then to find a non-empty open subset $U$ of $X$ such that for every
$Y \in \mathcal{Y}$ we have $Y \cap U$ is dense in $U$ whenever $Y \cap U \ne \empt$.
We put then $\mathcal{Z} = \{Y \in \mathcal{Y} : Y \cap U \ne \empt \}$.
Consider any non-empty open subset $V$ of $U$ such that $\overline{V} \subs U$.
But then corollary \ref{co:w} can be applied to $\overline{V}$ and
the covers $\mathcal{Z}$ and $\mathcal{H}$ restricted to $\overline{V}$ to conclude
that $w(V) \le w(\overline{V}) \le \mathfrak{c}$.

Now, using the homogeneity and the compactness of $X$, we can cover $X$ by finitely
many open sets each homeomorphic to $V$, that clearly implies $w(X) = w(V) \le \mathfrak{c}$.
Thus we are done because we have $|X| \le w(X)^{\pi\chi(X)} = \mathfrak{c}$, again by \cite{vM}.
\end{proof}

\smallskip

As is well-known, $\mathfrak{c} = 2^\omega < 2^{\omega_1}$ implies that any homogeneous compactum
of size $\mathfrak{c}$ is first countable. Consequently, under this assumption, the compacta
figuring in theorems \ref{tm:DCT} and \ref{tm:finCT} all turn out to be first countable, hence CT.
This fact makes the following natural problem even more interesting.

\begin{problem}
Is it consistent to have a homogeneous compactum that is $\sigma$-CT but not CT?
\end{problem}

\section{$\sigma$-CT products}

Since in theorem \ref{tm:DCT} one requires the homogeneity of $X^\omega$ instead of $X$,
it is natural to raise the question: What if, similarly, we require the $\sigma$-CT property from
$X^\omega$ rather than from $X$?

Now, the main aim of this section is to prove that if a product of ($T_1$) spaces is $\sigma$-CT
then all but finitely many of its factors are actually CT. This clearly implies that if $X^\omega$
is $\sigma$-CT then $X$ is actually CT.

We shall in fact prove a stronger
result for which we need the following lemma. We recall that a non-empty subset $S \subs X$ of a space
$X$ is called a {\em weak P set} if for every countable subset $T$ of its complement $X \setm S$
we have $\overline{T} \cap S = \empt$.

\begin{lemma}\label{lm:wP}
Consider the product $X \times Y$ where $X$ has a nowhere dense weak P subset $S$. Then no
{\em dense} CT subset $A$ of  $X \times Y$ intersects the subproduct $S \times Y$.
\end{lemma}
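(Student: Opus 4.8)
The plan is to argue by contradiction, pushing an approximating countable set down to $X$ via the first projection. So suppose $A$ is a \emph{dense} CT subset of $X \times Y$ and that it does meet $S \times Y$, say $(s,y) \in A$ with $s \in S$. The strategy is to approximate the point $(s,y)$ from within $A$ by points whose first coordinate avoids $S$, and then project to contradict the weak $P$ property of $S$.

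First I would replace $S$ by its closure in order to have an honest open set to work with. Since $S$ is nowhere dense, $G = X \setminus \overline{S}$ is a dense open subset of $X$, so $G \times Y$ is dense and open in $X \times Y$. Being the intersection of the dense set $A$ with the dense open set $G \times Y$, the set $A' = A \cap (G \times Y)$ is itself dense in $X \times Y$; in particular $(s,y) \in \overline{A'}$, and as $(s,y) \in A$ this means $(s,y)$ lies in the closure of $A'$ taken inside the subspace $A$.

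Now I would invoke the countable tightness of $A$: there is a countable set $C \subseteq A'$ with $(s,y) \in \overline{C}$. Let $\pi : X \times Y \to X$ denote the projection onto the first factor. Then $\pi[C]$ is a countable subset of $G$, hence of $X \setminus S$, while continuity of $\pi$ gives $s = \pi(s,y) \in \pi[\overline{C}] \subseteq \overline{\pi[C]}$. Thus $s$ is a point of $S$ lying in the closure of a countable subset of $X \setminus S$, which contradicts the hypothesis that $S$ is a weak $P$ set, and finishes the argument.

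There is essentially no serious obstacle here beyond bookkeeping; the only point requiring a small amount of care is that nowhere density of $S$ does not make $X \setminus S$ open, so one must pass to $X \setminus \overline{S}$ to be sure that $A'$ stays dense. Everything else is a direct unwinding of the definitions of density, countable tightness, and weak $P$ set, and no separation axiom on $X$ or $Y$ is actually used.
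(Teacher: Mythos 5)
Your proof is correct and is essentially the paper's argument: the paper first notes that $S \times Y$ is a nowhere dense weak P subset of $X \times Y$, intersects $A$ with the (dense) complement of that subproduct, and applies countable tightness of $A$ — and your projection of the countable set $C$ to the first factor is exactly the verification hiding behind the paper's ``clearly'' in that first step. Your only cosmetic deviation is passing to $X \setminus \overline{S}$ to guarantee density, where the paper instead uses that removing the nowhere dense set $S \times Y$ from a dense set keeps it dense.
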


\begin{proof}
Indeed, then $S \times Y$ is clearly a nowhere dense weak P subset of $X \times Y$. Consequently,
$B = A \cap (X \times Y \setm S \times Y)$ is also dense in $X \times Y$ and no point of $S \times Y$
is in the closure of a countable subset of $B$. But this clearly implies that no point of $S \times Y$
can be in $A$ because it is CT.
\end{proof}

\begin{theorem}\label{tm:wP}
Assume that $\{X_i : i  < \omega \}$ is a sequence of spaces such that each $X_i$ has
a nowhere dense weak P subset $S_i$. Then their product $X = \prod \{X_i : i < \omega \}$
is not $\sigma$-CT.
\end{theorem}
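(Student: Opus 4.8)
The plan is to argue by contradiction. Suppose $X=\prod_{i<\omega}X_i$ is $\sigma$-CT, say $X=\bigcup_{n<\omega}A_n$ with every $A_n$ CT, and let us manufacture a point of $X$ that lies in no $A_n$. For a finite $F\subset\omega$ and a function $q$ with domain $F$ and $q(i)\in X_i$, call $W=\{x\in X:x_i=q(i)\text{ for all }i\in F\}$ a \emph{fiber}; since every $X_i\ne\empt$, fibers are nonempty, and after the obvious identification a fiber is exactly the full product $\prod_{i\notin F}X_i$ over a cofinite index set. In particular, for every $j\notin F$ a fiber $W$ is homeomorphic to $X_j\times\prod_{i\in\omega\setminus(F\cup\{j\})}X_i$, with $\{x\in W:x_j\in S_j\}$ corresponding to $S_j\times\prod_{i\in\omega\setminus(F\cup\{j\})}X_i$ and with $X_j$ carrying the nowhere dense weak P set $S_j$. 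Hence Lemma \ref{lm:wP} applies verbatim inside any fiber: if $C\subset W$ is CT and dense in $W$, then $C\cap\{x\in W:x_j\in S_j\}=\empt$ for every $j\notin F$.

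Next I would construct recursively finite sets $\empt=F_0\subset F_1\subset\cdots$, functions $q_k$ with domain $F_k$, $q_k(i)\in X_i$, and $q_k\subset q_{k+1}$, so that the associated fibers $X=W_0\supset W_1\supset\cdots$ satisfy $W_{k+1}\cap A_k=\empt$ for all $k$. At stage $k$, look at the CT set $A_k\cap W_k\subset W_k$ and split into two cases. \emph{Case (a): some $j\in\omega\setminus F_k$ has $x_j\notin S_j$ for every $x\in A_k\cap W_k$.} As $S_j\ne\empt$, pick $p\in S_j$ and put $F_{k+1}=F_k\cup\{j\}$, $q_{k+1}=q_k\cup\{(j,p)\}$; then every point of $W_{k+1}$ has $j$th coordinate in $S_j$, so $W_{k+1}\cap A_k=W_{k+1}\cap(A_k\cap W_k)=\empt$. \emph{Case (b): $A_k\cap W_k$ is not dense in $W_k$.} Viewing $W_k$ as $\prod_{i\notin F_k}X_i$, there is then a nonempty basic open box $B\subset W_k$ with $B\cap A_k=\empt$, determined by finitely many coordinates $E\subset\omega\setminus F_k$ constrained to nonempty open sets $Q_i$ ($i\in E$); choose $p_i\in Q_i$ and put $F_{k+1}=F_k\cup E$, $q_{k+1}=q_k\cup\{(i,p_i):i\in E\}$, so that $W_{k+1}\subset B$ and again $W_{k+1}\cap A_k=\empt$. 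The crucial point is that (a) and (b) together are exhaustive: if (a) fails then $A_k\cap W_k$ meets $\{x\in W_k:x_j\in S_j\}$ for every $j\notin F_k$, so by the instance of Lemma \ref{lm:wP} recorded in the first paragraph it cannot be dense in $W_k$, i.e.\ (b) must hold.

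Finally, since the $q_k$ cohere, any point $p^*$ with $p^*_i=q_k(i)$ whenever $i\in F_k$ (chosen arbitrarily on the remaining coordinates) lies in every $W_k$, hence in none of the $A_k$, contradicting $\bigcup_k A_k=X$. I expect the only genuine content to be the dichotomy in the recursion, where Lemma \ref{lm:wP} is precisely what eliminates the ``dense, yet avoiding no slice $\{x_j\in S_j\}$'' possibility; the rest is bookkeeping. One feature of the construction is, however, essential: at each stage the newly used coordinates are frozen to \emph{single points} rather than to smaller open sets, so that the decreasing chain $(W_k)$ has automatically nonempty intersection. This is what lets the fusion go through without any Baire-category assumption on $X$ — which is unavoidable, since a product of spaces each carrying a nowhere dense weak P set need not be a Baire space.
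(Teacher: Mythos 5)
Your proof is correct and follows essentially the same strategy as the paper's: a recursion freezing finitely many coordinates at a time, with a dichotomy at each stage resolved by Lemma \ref{lm:wP} (you split into ``slice-avoiding'' versus ``not dense'' and use the lemma to show exhaustiveness, while the paper splits directly on density and uses the lemma in the dense case --- the same argument reorganized). The only other differences are cosmetic: you freeze arbitrary finite coordinate sets instead of initial segments, and your closing remark about freezing to single points so that no Baire-category hypothesis is needed matches the paper's construction exactly.
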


\begin{proof}
Let $\{A_n : n < \omega \}$ be any countable collection of CT subspaces of $X$. We shall show
that $\{A_n : n < \omega \}$ does not cover $X$. To do that, we are going to define a
strictly increasing sequence $\{k_n : n < \omega\}$ of natural numbers and, for each $n < \omega$,
points $x_i \in X_i$ for $i < k_n$ such that if we put $Y_i = \{x_i\}$ for $i < k_n$ and $Y_i = X_i$
for $i \ge k_n$ then we have $$Z_n = \prod\{Y_i : i < \omega\} \subs X \setm \bigcup_{m<n} A_m = \empt\,.$$

To start with, we simply put $k_0 = 0$. Next, if $k_n$ and $x_i \in X_i$ for $i < k_n$ have been
chosen for some $n < \omega$, then we have to define $k_{n+1} > k_n$ and the points $x_i \in X_i$
for $k_n \le i < k_{n+1}$. To do that, we distinguish two cases.

\smallskip

{\bf Case 1.} $A_n \cap Z_n$ is not dense in $Z_n$. Then we can find $k_{n+1} > k_n$ and a non-empty
open set $U_i \subs X_i$ for all $k_n \le i < k_{n+1}$ such that $A_n$ is disjoint from the subproduct
of $Z_n$ that is obtained by shrinking $X_i$ to $U_i$ for $k_n \le i < k_{n+1}$ and leaving all other factors
unchanged. Thus if we pick $x_i \in U_i$ for $k_n \le i < k_{n+1}$ then the inductive hypothesis will
clearly remain valid for $n+1$.

\smallskip

{\bf Case 2.} $A_n \cap Z_n$ is dense in $Z_n$. In this case we put $k_{n+1} = k_n + 1$ and pick $x_{k_n} \in S_{k_n}$.
Then lemma \ref{lm:wP} implies that $Z_{n+1} \cap A_n = \empt$ and, as $Z_{n+1} \subs Z_n$, we are again done.

\smallskip

Having completed the induction it is obvious that the point of the product $X$ whose $i$th co-ordinate
is $x_i$ for all $i < \omega$ does not belong to $\bigcup_{n<\omega} A_n$, hence, indeed,
$\{A_n : n < \omega \}$ does not cover $X$.
\end{proof}

\begin{corollary}\label{co:prod}
If the product $X = \prod \{X_i : i \in I \}$ is $\sigma$-CT then only finitely many of its factors $X_i$
can have a nowhere dense weak P subset. In particular, all but finitely many of its factors $X_i$ are CT.
\end{corollary}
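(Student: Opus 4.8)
The plan is to derive both clauses of the corollary from Theorem~\ref{tm:wP} by a pigeonhole and rebracketing argument. For the first clause, suppose towards a contradiction that infinitely many factors carry a nowhere dense weak P subset; fix a countably infinite $J=\{j_n:n<\omega\}\subseteq I$ and for each $n$ such a subset $S_{j_n}\subseteq X_{j_n}$. Setting $R:=\prod\{X_i:i\in I\setminus J\}$ we have $X\cong(X_{j_0}\times R)\times\prod\{X_{j_n}:1\le n<\omega\}$, and, using the fact recorded in the proof of Lemma~\ref{lm:wP} that $S\times W$ is a nowhere dense weak P subset of $V\times W$ whenever $S$ is one of $V$, each of these countably many factors has a nowhere dense weak P subset --- namely $S_{j_0}\times R$ in the first and $S_{j_n}$ in the $n$-th for $n\ge 1$. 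Hence $X$ is homeomorphic to a countable product of spaces each of which has a nowhere dense weak P subset, so by Theorem~\ref{tm:wP} it is not $\sigma$-CT, contradicting the hypothesis. Thus only finitely many factors can have a nowhere dense weak P subset.

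For the second clause a little more care is needed, because a non-CT factor need not itself have a nowhere dense weak P subset --- for instance $\mathbb{C}_{\omega_1}$ is separable, and a separable space has no nonempty nowhere dense weak P subset (given a countable dense set $D$, the set $D\setminus\overline{S}$ is again dense and disjoint from $S$) --- so one cannot simply invoke the first clause. Instead I would pass to subspaces. Suppose, again towards a contradiction, that infinitely many factors $X_j$ ($j\in J$, $|J|=\omega$) fail to be CT. For each $j\in J$ choose $B_j\subseteq X_j$ and $q_j\in\overline{B_j}$ with $q_j\notin\overline{C}$ for every countable $C\subseteq B_j$; this is possible since $X_j$ is not CT, and then $q_j\notin B_j$ automatically. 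Put $Z_j:=B_j\cup\{q_j\}$ with the subspace topology. Then $\{q_j\}$ is a nonempty nowhere dense weak P subset of $Z_j$: no countable subset of $Z_j\setminus\{q_j\}=B_j$ has $q_j$ in its closure, so $\{q_j\}$ is a weak P set, and it is closed (as $X_j$, hence $Z_j$, is $T_1$) and not open in $Z_j$ (since $q_j\in\overline{B_j}$), hence nowhere dense.

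Now let $Y$ be the subspace of $X$ obtained by replacing each factor $X_j$ with $j\in J$ by $Z_j$ and leaving the remaining factors unchanged. Being a subspace of the $\sigma$-CT space $X$, $Y$ is itself $\sigma$-CT (intersect a countable cover of $X$ by CT sets with $Y$ and use that countable tightness is hereditary). On the other hand, rebracketing $Y$ just as in the first clause exhibits it as a countable product of spaces each carrying a nowhere dense weak P subset, so by Theorem~\ref{tm:wP} $Y$ is \emph{not} $\sigma$-CT --- a contradiction. Hence all but finitely many of the $X_i$ are CT. The rebracketing book-keeping, the heredity of $\sigma$-CT, and the behaviour of weak P subsets in products are all routine; the step I expect to need genuine thought is the one just above --- realising that a space which is not CT, even when it has no nowhere dense weak P subset of its own, always contains a subspace $B\cup\{q\}$ that does, and that this subspace can be fed into Theorem~\ref{tm:wP} through the subproduct.
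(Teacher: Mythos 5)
Your proof is correct and takes essentially the same route as the paper: both clauses are reduced to Theorem~\ref{tm:wP}, the second via the key observation (also used in the paper) that a non-CT $T_1$ space contains a subspace in which some singleton is a nowhere dense weak P set. The only cosmetic difference is that you absorb the remaining factors into one coordinate by rebracketing and apply Theorem~\ref{tm:wP} to $X$ (resp.\ to the subspace $Y$) directly, whereas the paper simply notes that $X$ contains a subspace homeomorphic to a countably infinite product as in Theorem~\ref{tm:wP} and uses that $\sigma$-CT is hereditary.
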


\begin{proof}
Indeed, if infinitely many factors $X_i$ would have a nowhere dense weak P subset then $X$ would
contain a subspace homeomorphic to a countably infinite product as in theorem \ref{tm:wP}, which is
clearly impossible.

The second part follows because it is clear that any space $Y$ that is not CT has a subspace
$Z$ containing a point $z \in Z$ such that the
singleton $\{z\}$ is a nowhere dense weak P subset of $Z$.
\end{proof}

\end{document}